\renewcommand{\epsilon}{\varepsilon}
\renewcommand{\phi}{\varphi}
\newcommand{\N}{\mathbb{N}}
\newcommand{\R}{\mathbb{R}}
\newcommand{\Z}{\mathbb{Z}}
\newcommand{\W}{\mathcal W}
\newcommand{\PP}{\mathcal P}
\newcommand{\E}{\mathcal E}
\newcommand{\calS}{\mathcal S}
\newcommand{\B}{\mathcal B}
\newcommand{\LL}{\mathcal L}
\newcommand{\both}{\text{I/II}}
\newcommand{\I}{\textrm{I}}
\newcommand{\II}{\textrm{II}}
\newcommand{\reg}{\textrm{reg}}
\newcommand{\sing}{\textrm{sing}}
 \newtheorem{theorem}{Theorem}
 \newtheorem{lemma}[theorem]{Lemma}
 \newtheorem{corollary}[theorem]{Corollary}
 \numberwithin{equation}{section}
 \numberwithin{theorem}{section}
\theoremstyle{definition}
\newtheorem{remark}[theorem]{Remark}
\providecommand{\abs}[1]{\lvert#1\rvert}
\providecommand{\norm}[1]{\lVert#1\rVert}
\providecommand{\inner}[2]{\langle#1,#2\rangle}
\title[Typical statistics]{\textbf{Statistics of patterns in typical cut and project sets}}
\author[Haynes, Julien, Koivusalo, Walton]{Alan Haynes, Antoine Julien, Henna Koivusalo,\\ James Walton}
\thanks{Research of AH, JW, HK supported by EPSRC grants L001462, J00149X, M023540. HK by Osk. Huttunen foundation.}
\date{\today}
\begin{document}

\begin{abstract}
In this article pattern statistics of typical cubical cut and project sets are studied. We give estimates for the rate of convergence of appearances of patches to their asymptotic frequencies. We also give bounds for repetitivity and repulsivity functions. The proofs use ideas and tools developed in discrepancy theory.
\end{abstract}

\maketitle


\section{Introduction}

\subsection{Overview}

Cut and project sets, or model sets, are an important class of point sets which are not periodic, yet are extremely regular. They were introduced by Meyer in the framework of harmonic analysis, as a generalisation of lattices, see~\cite{Moo97}. 

The most acclaimed application of these points sets to this date has been in crystallography. In the early 1980s, Nobel Prize laureate Dan Shechtman~\cite{SchBleGraCah84} discovered a material for which the diffraction pattern had both sharp peaks (a feature of order), and five-fold symmetry (an obstruction to periodicity). Cut and project point sets, such as the Penrose patterns, provide instructive models of these materials. Properties of cut and project sets are also being studied in connection with signal sampling and reconstruction~\cite{MatMey10}.

The idea of the cut and project method is the following. Starting with a lattice, such as the standard integer lattice $\Z^k \subset \R^k$, pick a $d$-dimensional subspace $E$ (the \emph{physical space}), and cut a slice $E+\W$ from the lattice, for some \emph{window} $\W$. The cut and project set associated with this data is the projection of $\Z^k \cap (E+\W)$ to $E$. With appropriate irrationality conditions on $E$, the resulting point set is not periodic in $E$, yet for sensible choices of $\W$ it inherits some of the regularity from the original lattice. The data of a cut and project set consists of the following parameters: the subspace $E$, the window $\W$, and the projection $\pi$ onto $E$ (determined by a choice of a complementary subspace $F_\pi$ to $E$). It is natural to ask how the properties of the cut and project set change when these parameters vary.

We will consider three functions when investigating how ordered a cut and project set is. Firstly, the complexity function: how many patches of a given size are there? This question has been satisfactorily answered in \cite{Jul12} for polytopal windows. Secondly, the repetitivity function: given a size $r$, how far does one need to look from any given point of the point set before all `legal' patches of size $r$ can be found? Finally, the discrepancy: a patch of size $r$ has an expected frequency; what is the difference between the expected and the actual number of times this patch occurs in a large region? While these quantities have already been studied for isolated examples of cut and project sets, their `typical' behaviour (in a measure theoretic sense to be made precise below) is largely unexplored.

As we will see, the importance of $F_\pi$ is only marginal as long as a few degenerate cases are ruled out.
Viewing $E$ as the graph of a linear transformation $L: \R^d \rightarrow \R^{k-d}$, we look for properties that hold for almost every choice of entries in the matrix associated to $L$. We restrict our attention to \emph{cubical} windows; that is, we assume that $\W$ is a $(k-d)$-dimensional face of the unit cube. As a point of reference, when $\W$ is the unit cube of $\R^k$, the cut and project data is usually called \emph{canonical}.

For some choices of dimensions $k$ and $d$, the corresponding cut and project sets are already well studied. When $d=1$ and $k=2$, cubical and canonical cut and project sets (in the case when the line $E$ has irrational slope) are known as Sturmian sequencies~\cite{Fog02, MH38}. These sets provide a partition of the line $E$ to two types of intervals, short and long, and can be symbolically coded by an element in $\{a,b\}^\Z$. It is known that many properties of Sturmian sequences can be determined explicitly by Diophantine properties of the slope $\theta$ of $E$, using in particular the continued fraction expansion of $\theta$. More generally, when $d=1$, cut and project sets are closely related to cubic billiard sequences. Some attempts have also been made to code $d$-dimensional cut and project sets, with $d>1$, as $\Z^d$-subshifts (sometimes under the name of ``discrete planes''), see~\cite{BV00} and~\cite[Chap.~10]{FHK02}.

When it comes to repetitivity, most of the known results have to do with extreme cases, rather than generic. For example, the lowest possible growth rate for the repetitivity function of an aperiodic point set is linear~\cite{LP02}. When the repetitivity is indeed bounded above by a linear function, the point set is called \emph{linearly repetitive}, or LR.
In a previous paper~\cite{HaynKoivWalt2015a}, a criterion was given for cut and project sets to be linearly repetitive. In particular, it was shown that almost no cubical or canonical cut and project set is LR. However, the almost everywhere behaviour of the repetitivity function has not been studied in general.

In the particular case of Sturmian sequences, Hedlund and Morse proved in their original series of papers~\cite{MH38,MH40} that for almost all $\theta$ and for all $\varepsilon > 0$, the repetitivity function $M(r)$ of the Sturmian sequence of slope $\theta$ is bounded above by $r\log(r)^{1+\varepsilon}$. It would appear that Sturmian sequences are not generically LR, but are fairly close to this optimal behaviour. In the present paper, we establish a higher-dimensional analogue of the Hedlund and Morse result. Theorem~\ref{main:transference} states that, given $d < k$, there exist $\delta > 0$ (which we determine explicitly in terms of $d$ and $k$) such that for Lebesgue almost any parameters in the cut and project method, the repetitivity function is bounded above by $r^{k-d} (\log r)^{\delta}$.

Discrepancy estimates for the frequencies of aperiodic Delone sets or aperiodic tilings have been studied by many authors, but the results are usually stated for the most regular classes of point sets: linearly repetitive or self-similar, see ~\cite{Ada04, ACG11, BufSol13, LP02, Sad11}. In Theorem~\ref{main:discrepancy} we give a discrepancy estimate for generic cubical cut and project sets. In short, the typical convergence to ergodic averages is very fast when looking at patches of a certain form. We also give a weaker bound which applies to more general patches which, for $d \geq 2$, is asymptotically the best possible over general search regions.

\subsection{Statement of results}
A (Euclidean) {\bf cut and project scheme} consists of the following data:
\begin{itemize}
	\item A {\bf total space} $\R^k$.
	\item A linear subspace $E \subset \R^k$ of dimension $d$ with $0 < d < k$, called the {\bf physical space}.
	\item A linear subspace $F_\pi \subset \R^k$, complementary to $E$ in $\R^k$, called the {\bf internal space}.
	\item A subset $\W_\pi \subset F_\pi$ called the {\bf window}.
\end{itemize}
The decomposition $\R^k = E+F_\pi$ defines the projections $\pi$ and $\pi^*$ onto $E$ and $F_\pi$, respectively. The {\bf slice} is defined as $\mathcal{S} \coloneqq \W_\pi + E$. Given $s \in \R^k$, we define the {\bf cut and project set}
\[
Y_s \coloneqq \pi(\mathcal{S} \cap (\Z^k+s)).
\]

In this paper, we restrict our attention to {\bf cubical} cut and project sets, which means that the window is given by $\pi^*([0,1]^{k - d}\times \{0\})$. The physical space will be assumed to be {\bf totally irrational}, which means that $E+\Z^k$ is dense in $\R^k$ (or, equivalently, that $\pi^*(\Z^k)$ is dense in $F_\pi$). We also adopt the conventional assumption that $\pi$ is injective on $\Z^k$. With these restrictions, the patch statistics of interest in this paper will be wholly dependent on the choice of physical space $E$, and in particular on its Diophantine properties. The physical space $E$ will always be given as the graph of a linear map $L : \R^d \to \R^{k-d}$, that is
\[
E = \{(x,L(x)) \mid x \in \R^d\}.
\]
It might be necessary to permute the indexing of the coordinate axes in order to write $E$ in this manner, but there is no loss in generality in doing so. We write $L_i(x) \coloneqq L(x)_i = \sum_{j=1}^d \alpha_{ij} x_j$ and use the coefficients $(\alpha_{ij}) \in \R^{d(k-d)}$ to parametrise the choice of physical space.

We say that $s \in \R^k$ and its corresponding cut and project set $Y_s$ are {\bf regular} if $\partial \mathcal{S} \cap (\Z^k+s) = \emptyset$.
Because of their special repetitivity properties, it is usual to restrict attention to regular cut and project sets.
The cut and project sets of this family all have the same finite sub-patches, so in what follows, given a cut and project scheme, there will be no loss in generality in assuming that some regular $Y=Y_s$ has been chosen.

Given $y \in Y$ and $r \in \mathbb{R}_+$, denote by $P(y,r)$ the {\bf $r$-patch at $y$}, which we think of as the pattern of points of $Y$ within distance $r$ of $y$. The precise definition of $P(y,r)$ will be given in Section~\ref{sec:toolbox} although, as shall be discussed in detail in Section \ref{sec:Intrinsically defined patches and search regions}, most of our results will not be dependent upon the particular choice of notion of $r$-patch. We consider two $r$-patches $P(y_1,r)$ and $P(y_2,r)$ to be equivalent, and write $P(y_1,r) \simeq P(y_2,r)$, if
\[
P(y_1,r)-y_1 = P(y_2,r)-y_2.
\]
Write $\PP(y,r)$ for the equivalence class of an $r$-patch $P(y,r)$ and, for $y \in Y$, let $\tilde{y}$ denote the unique element of $\Z^k + s$ with $\pi(\tilde{y})=y$. For an equivalence class $\PP$, $y \in Y$ and $R>0$, define
\[
\xi_\PP(y,R) \coloneqq \frac{\#\{y' \in Y \mid \PP(y',r) = \PP \text{ and } \tilde{y}'-\tilde{y} \in [-R,R]^d \times \R^{k-d}\}}{\#\{y' \in Y \mid \tilde{y}'-\tilde{y} \in [-R,R]^d \times \R^{k-d}\}}
\]
In other words, $\xi_\PP(y,R)$ is the ratio of the number of occurrences of $\PP$ in $Y$ in a box of size $R$ around $y$ (with some points of $Y$ carefully chosen in or out of the box near its boundary) relative to the total number of points of $Y$ in this box.

The {\bf frequency} of $\PP$ is defined to be
\[
\xi_\PP \coloneqq \lim_{R \to \infty} \xi_\PP(y,R).
\]
The above limit is always well-defined and does not depend on the choice of $s \in \R^k$ or $y \in Y$. Our first theorem concerns the typical rate of convergence of the estimates $\xi_\PP(y,R)$ to the asymptotic patch frequencies $\xi_\PP$. Its proof is given in Section \ref{sec:discrepancy proof}.
\begin{theorem}\label{main:discrepancy}
Fix $\epsilon >0$. Then for almost all linear maps $L \colon \R^d\to \R^{k-d}$, for the corresponding cubical cut and project sets we have the bound
\[
|\xi_\PP(y,R) - \xi_\PP|\le C\cdot \frac {(\log R)^{k + \epsilon}}{R^d},
\]
for all $R \geq 1$ for all equivalence classes of patches $\PP$. The constant $C>0$ depends on $L, \epsilon$ and $\pi$.
\end{theorem}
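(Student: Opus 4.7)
The plan is to reduce the statement to a star-discrepancy estimate for a Kronecker-type sequence, then apply the Erd\H{o}s--Tur\'{a}n--Koksma inequality together with a Khintchine--Groshev metric Diophantine bound. After choosing coordinates so that $F_\pi$ is the last $k-d$ coordinate subspace and $\W = [0,1]^{k-d}$, the window becomes an axis-aligned unit cube and, for each $n_1 \in \Z^d$, there is a unique $n_2 = n_2(n_1) \in \Z^{k-d}$ making $(n_1,n_2)+s$ an element of the slice. The resulting internal coordinate $x_{n_1} \coloneqq s_2 - L(n_1+s_1) \pmod{\Z^{k-d}}$ determines the patch type: the arrangement in $F_\pi$ generated by translates of $\partial\W$ is axis-aligned, so each patch equivalence class $\PP$ corresponds to an axis-aligned sub-box $A_\PP \subset \W$, and both the numerator and denominator of $\xi_\PP(y,R)$ are counts of those $n_1 \in \Z^d \cap [-R,R]^d$ whose $x_{n_1}$ lies in a prescribed axis-aligned box. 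Thus, the theorem will follow from a star-discrepancy bound $D_R^* \leq C (\log R)^{k+\epsilon}/R^d$ for the sequence $(x_{n_1})$, uniformly over axis-aligned boxes.

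For this, I would apply Erd\H{o}s--Tur\'{a}n--Koksma in $\mathbb{T}^{k-d}$:
\[
D_R^* \leq C\left(\frac{1}{H} + \sum_{0 < \|h\|_\infty \leq H}\frac{|S_R(h)|}{r(h)}\right),
\]
where $r(h) = \prod_i \max(1,|h_i|)$ and $S_R(h) = (2R+1)^{-d}\sum_{n \in \Z^d \cap [-R,R]^d} e(h \cdot L(n))$. The linearity of $L$ lets the sum factor over coordinate directions, giving
\[
|S_R(h)| \leq \prod_{j=1}^{d}\min\left(1, \frac{1}{(2R+1)\|\beta_j(h)\|}\right), \qquad \beta_j(h) = \sum_{i=1}^{k-d} h_i \alpha_{ij}.
\]
A Khintchine--Groshev type theorem then supplies, for a.e. matrix $L=(\alpha_{ij})$ and every $\epsilon'>0$, a lower bound on $\prod_j \max(\|\beta_j(h)\|, 1/R)$ in terms of a suitable negative power of $r(h)$ with at most $d$ logarithmic corrections. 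Plugging this into the ETK sum, summing over $h \in \Z^{k-d}$, and balancing $H$ as an appropriate polynomial in $R$, should produce the target exponent: $d$ logarithms from the factor-wise estimates and $k-d$ further logarithms from summation over $h$, yielding $(\log R)^{k+\epsilon}$.

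Finally, the transfer from star-discrepancy to the theorem is short: since $\xi_\PP = \mathrm{Vol}(A_\PP)/\mathrm{Vol}(\W)$ and each axis-aligned box count satisfies $|N_B - \mathrm{Vol}(B)(2R+1)^d| \leq (2R+1)^d D_R^*$, the ratio error in $\xi_\PP(y,R)$ will be at most $O(D_R^*/\mathrm{Vol}(\W))$ with constants independent of $\PP$. The principal technical obstacle lies in the combined discrepancy and Diophantine estimate: one must ensure that the metric Khintchine--Groshev bound used is strong enough to simultaneously control the joint quantities $\prod_j \|\beta_j(h)\|$ (rather than each $\beta_j$ separately), and that the log-factors from the product form of $S_R(h)$ combine cleanly with those arising from summation in $\Z^{k-d}$ to give exactly $k = d + (k-d)$ logs.
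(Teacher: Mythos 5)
Your overall route is the same as the paper's: identify the acceptance domain of a patch as an axes-parallel box in the window (Lemma \ref{lem:connected components}), recast $\xi_\PP(y,R)$ as a box-counting problem for the orbit $\LL(n)+y^*$ with $n\in[-R,R]^d_\Z$, apply Erd\H{o}s--Tur\'{a}n--Koksma, factor the exponential sum to obtain $|S_R(h)|\ll\prod_{j=1}^d\|\inner{h}{\LL(e_j)}\|^{-1}$, and choose $H$ polynomially in $R$. Up to that point the proposal is sound and matches Section \ref{sec:discrepancy proof}.

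The gap is in the step you yourself flag as the principal obstacle, and the resolution you sketch will not work. You propose to control the ETK sum by a termwise metric lower bound on $\prod_j\max(\|\beta_j(h)\|,1/R)$ supplied by a Khintchine--Groshev type theorem. No termwise bound can yield the required estimate
\[
\sum_{0<|h|\le H}r(h)\prod_{j=1}^d\|\beta_j(h)\|^{-1}\ll(\log H)^{k+\epsilon},
\]
because the sum has $\asymp H^{k-d}$ terms: a pointwise bound valid for every $h$ would have to make a typical term of size $\ll (\log H)^{k+\epsilon}H^{-(k-d)}$, i.e.\ force $\prod_j\|\beta_j(h)\|\gg r(h)\,H^{k-d}(\log H)^{-k-\epsilon}$, which is absurd for $|h|\asymp H$ since the left side is at most $2^{-d}$. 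Already for $d=k-d=1$ the termwise Khintchine--Groshev bound $\|h\alpha\|\gg h^{-1}(\log h)^{-1-\epsilon}$ only gives $\sum_{h\le H}(h\|h\alpha\|)^{-1}\ll H(\log H)^{1+\epsilon}$, whereas the polylogarithmic truth requires showing that the few $h$ with small $\prod_j\|\beta_j(h)\|$ are rare \emph{on average over the matrix}. This is exactly what Lemma \ref{lem:technical} of the paper does: one integrates $\bigl(\|\inner{h}{\beta}\|\cdot|\log\|\inner{h}{\beta}\||^{1+\delta}\bigr)^{-1}$ over the torus, checks that this integral is bounded uniformly in $h$, deduces by Tonelli that the series weighted by $\prod_j\bigl(h_j(\log h_j)^{1+\delta}\bigr)^{-1}$ converges for almost every matrix, and only then invokes Khintchine--Groshev --- not to bound the product from below, but merely to bound the logarithmic correction factors $|\log\|\inner{h}{\alpha}\||\ll\log H$, which is where the exponent $k=d+(k-d)$ is assembled. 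That first-moment argument is the missing idea; without it your plan stalls precisely where you predicted.
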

The above theorem shows that the discrepancy of patches in typical cubical cut and project sets is remarkably low. We emphasize that the constant $C$ in the above result does not depend on the particular equivalence class of patch $\PP$ in question. Something that allows this discrepancy to be so low is that, as indicated above, which points near the boundary of an $r$-patch of $\PP$ are to be considered in or out, and which points near the boundary of an $R$-neighbourhood of a given point $Y$ are to be counted, are judiciously chosen extrinsically with respect to the cut and project scheme. One may ask how these estimates change if the shapes of patches or search regions are altered; variants such as this are discussed in Section \ref{sec:Intrinsically defined patches and search regions}. The above estimate in fact allows one to give good bounds for more general patch types, over more general search regions.

Given $\phi : \mathbb{R}_+ \to \mathbb{R}_+$, we shall say that $Y$ is {\bf $\phi$-repetitive} if, for sufficiently large $r$, for any equivalence class of $r$-patch $\PP$ and any $y \in Y$, there exists some $y'$ in the ball $B(y,\phi(r))$ with $\PP(y',r) = \PP$. In other words, for every $r$-patch that occurs \emph{somewhere} in the pattern one may find a translate of it within $\phi(r)$ of \emph{any} point of the pattern. Every cut and project set $Y$ is repetitive, which means that it is $\phi$-repetitive for some $\phi$. In \cite{HaynKoivWalt2015a} the question of which cubical cut and project sets are {\bf linearly repetitive} ({\bf LR}) was investigated. LR means that $Y$ is $\phi$-repetitive with $\phi(r) = Cr$ for some $C>0$. The results of \cite{HaynKoivWalt2015a} show that this property is rare, in that a typical cut and project set is not LR (and for some choices of $d$ and $k$ there are no non-trivial examples of LR cut and project sets). The following theorem, proved in Section \ref{sec:repetitivity}, gives a bound for the repetitivity function of a typical cut and project set.
\begin{theorem}\label{main:transference}
Fix $c, \epsilon>0$. For Lebesgue almost all linear maps $L:\R^d\to \R^{k-d}$, the corresponding cubical cut and project sets are $\phi$-repetitive for
\[
\phi(r) \ge Cr^{k-d}(\log r)^{\frac{2k-1}{d} - 1 + \epsilon}
\]
but are not $\phi$-repetitive for
\[
\phi(r) \le cr^{k-d} (\log r)^{1/d}.
\]
The constant $C$ depends on $L, \epsilon$ and $\pi$.
\end{theorem}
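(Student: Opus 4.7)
The plan is to route both bounds through the minimum $r$-patch frequency $\xi_{\min}(r):=\min\{\xi_\PP:\PP\text{ an equivalence class of }r\text{-patch}\}$ and control it from both sides for Lebesgue almost every $L$.

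For the upper bound on $\phi$, I would apply Theorem~\ref{main:discrepancy} as a transference: the count of occurrences of any $r$-patch $\PP$ in the box of side $R$ around $y\in Y$ is at least $\xi_\PP N(R)-C(\log R)^{k+\epsilon}$, with $N(R)\asymp R^d$ the total point count, so $\PP$ must occur in the box whenever $R^d\xi_\PP\gg(\log R)^{k+\epsilon}$. Since such a box is comparable to a Euclidean ball of the same radius, $Y$ is $\phi$-repetitive provided $\phi(r)^d\xi_{\min}(r)\gg (\log\phi(r))^{k+\epsilon}$. Substituting a bound of the form $\xi_{\min}(r)\ge c\,r^{-d(k-d)}(\log r)^{-(k-1-d)-\epsilon}$, and noting that the target $\phi$ is polynomial in $r$ so that $\log\phi(r)\asymp\log r$, then yields the stated exponent $(2k-1)/d-1+\epsilon$.

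For the non-repetitivity statement I would exhibit, for infinitely many $r$, an $r$-patch $\PP$ together with a point $y\in Y$ whose $\phi(r)$-ball contains no copy of $\PP$. Since $\PP(y',r)=\PP$ exactly when $\pi^*(\tilde y')$ lies in the acceptance domain $A_\PP\subset\W$ of $\PP$ (a region of measure proportional to $\xi_\PP$), and since the internal shifts $\pi^*(\tilde y'-\tilde y)$ over $y'$ within $\phi(r)$ of $y$ form a finite set $V$ of cardinality $\asymp\phi(r)^d$, it suffices to have $\bigcup_{v\in V}(A_\PP-v)$ of measure less than $\text{vol}(\W)$; an appropriate $y$ is then found using density of $\pi^*(Y)$ in $\W$. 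This sufficient condition reduces to $\xi_{\min}(r)\le C/(r^{d(k-d)}\log r)$ for infinitely many $r$.

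The main obstacle is establishing these two bounds on $\xi_{\min}(r)$. Every acceptance domain is a cell in the arrangement of $\asymp r^d$ affine hyperplanes cut from the $(k-d)$-dimensional cubical window, each hyperplane a facet of $\W$ translated by $\pi^*(x)$ for some $x\in\Z^k$ with $|\pi(x)|\le r$; exceptionally small cells correspond to near-incidences of the $\pi^*(x)$ with translates of the coordinate hyperplanes, i.e.\ to simultaneous Diophantine approximation problems for the rows of $L$. The required upper bound on $\xi_{\min}$ (which drives the non-repetitive direction) comes from the divergence side of a Khintchine--Groshev-type result, producing exceptionally good approximations infinitely often for almost every $L$; the complementary lower bound (which drives the $\phi$-repetitive direction) comes from the convergence side via Borel--Cantelli. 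Fitting the logarithmic exponents so as to land at $1/d$ on the non-repetitive side and $(2k-1)/d-1+\epsilon$ on the repetitive side is where the technical work concentrates.
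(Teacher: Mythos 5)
Your non-repetitivity half is essentially the paper's own argument: the divergence case of Khintchine--Groshev applied to $L_1$ produces, for infinitely many $r$, an acceptance domain with one side $\le c_1 r^{-d}(\log r)^{-1}$ and the remaining sides $\le r^{-d}$ by pigeonhole, hence volume $\le c_1 r^{-d(k-d)}(\log r)^{-1}$; comparing this against the $\asymp \phi(r)^d$ internal shifts available in a search region of size $\phi(r)=cr^{k-d}(\log r)^{1/d}$ and using density of the star map image to place $y$ appropriately is exactly the counting argument in Section \ref{sec:repetitivity} (with the one refinement that $c_1$ must be taken small relative to the fixed counting constant, which the divergence condition permits). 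That half is sound.

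The repetitive half has a genuine gap: the bound $\xi_{\min}(r)\ge c\,r^{-d(k-d)}(\log r)^{-(k-1-d)-\epsilon}$ that you substitute is not available --- indeed it is false for almost every $L$ at least when $k-d=1$. The smallest acceptance domain is a box whose $i$-th side is the minimal gap of $\{\LL_i(n) : |n|\le r\}$ in $\R/\Z$, which is at most $\min_{0<|n|\le r}\|L_i(n)\|$; the divergence case of Lemma \ref{lem:Khintchine-Groshev} then gives $\xi_{\min}(r)\le r^{-d}(\log r)^{-1}$ infinitely often when $k-d=1$, whereas you need $\xi_{\min}(r)\gg r^{-d}(\log r)^{-\epsilon}$. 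In general the convergence case only yields $\xi_{\min}(r)\gg_\epsilon r^{-d(k-d)}(\log r)^{-(k-d)-\epsilon}$: each of the $k-d$ independent rows contributes a factor $(\log r)^{-1-\epsilon}$, and since the minimal gaps in different coordinates are attained at different lattice vectors there is no multiplicative saving. Feeding this correct bound into your discrepancy transference gives $\phi(r)=r^{k-d}(\log r)^{2k/d-1+\epsilon}$, which is precisely the weaker Corollary the paper states right after the proof of Theorem \ref{main:discrepancy}, one power $(\log r)^{1/d}$ short of the theorem. The paper closes this gap by a different mechanism: it never compares point counts with volumes, but instead uses the Cassels transference principle (Lemma \ref{lem:transference}) to show the orbit $\{\LL(n):|n|\le\phi\}$ is $\delta$-dense with $\delta=\phi^{-d/(k-d)}(\log\phi)^{(d-1)/(k-d)+1+\delta'}$, and compares $\delta$ with the minimum \emph{side length} $(r^d(\log r)^{1+\epsilon})^{-1}$ of the box-shaped acceptance domains. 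This per-coordinate covering-radius comparison avoids paying the full $(\log R)^{k+\epsilon}$ discrepancy loss, and that is exactly where the missing logarithmic factor comes from.
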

Repetitivity measures the largest gap between consecutive appearances of the same $r$-patch. Another measure of the regularity of patterns, which in some sense is dual to this, is repulsivity: what is the \emph{smallest} gap between consecutive appearances of the same $r$-patch? We say that $Y$ is {\bf $\phi$-repulsive} for some $\phi : \R_+ \rightarrow \R_+$ if, for sufficiently large $r$, whenever $\PP(y,r) = \PP(y',r)$ for distinct $y,y' \in Y$ then $d(y,y') > \phi(r)$. So if $Y$ is $\phi$-repulsive for a `large' function $\phi$, then any two occurrences of an $r$-patch of $Y$ are forced to be far apart, relative to $r$. It is common in the literature to refer to $Y$ as {\bf repulsive} if $Y$ is $\phi$-repulsive with $\phi(r)=cr$ for some $c > 0$. It is known that $Y$ being repulsive is a necessary condition for $Y$ to be LR (see \cite[Lemma 2.4]{Sol98}), and for $d=k-d=1$, $Y$ being LR is equivalent to $Y$ being repulsive.
The following theorem, proved in Section \ref{sec:repetitivity}, studies repulsivity of typical cut and project sets.
\begin{theorem} \label{thm:repulsive}
Fix $c,\epsilon > 0$. For Lebesgue almost all linear maps $L : \R^d \rightarrow \R^{k-d}$, the corresponding cubical cut and project sets are not $\phi$-repulsive for
\[
\phi(r) \ge \frac{cr^{k-d}}{(\log r)^{1/d}}
\]
but are $\phi$-repulsive for
\[
\phi(r) \le \frac{cr^{k-d}}{(\log r)^{(1/d)+k-d +\epsilon}}.
\]
\end{theorem}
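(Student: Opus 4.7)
The plan is to translate the repulsivity statement into a question about simultaneous Diophantine approximation for $L$, and then apply the Khintchine--Groshev theorem in its divergent form for the first half and its convergent form for the second, following the same template as the proof of Theorem~\ref{main:transference}.

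First I set up a dictionary between patch equivalence and integer approximation. Write $n = \tilde y' - \tilde y = (a,b) \in \Z^d \oplus \Z^{k-d}$ for the lattice difference of two distinct points $y,y' \in Y$, so that $|y-y'| \asymp |a|$ while, with $b$ chosen optimally, $\pi^*(n)$ reduces to $\|L(a)\|_{\Z^{k-d}} := \min_{b \in \Z^{k-d}}|L(a)-b|_\infty$. For a cubical window the number of $\pi^*(m)$ with $\pi(m) \in B(y,r)$ lying within distance $\delta$ of $\partial\W$ is of order $r^d\delta$ for generic $L$ and basepoint, so the relation $\PP(y,r) = \PP(y',r)$ is equivalent, up to absolute constants, to
\[
\|L(a)\|_{\Z^{k-d}} \lesssim r^{-d},
\]
and conversely each such $a$ produces matching pairs of $r$-patches at some scale $r \asymp \|L(a)\|_{\Z^{k-d}}^{-1/d}$, with gap $\asymp |a|$. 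I would prove this by combining the patch decomposition machinery of Section~\ref{sec:toolbox} with a measure-theoretic genericity argument on the basepoint, in parallel to what is done for Theorems~\ref{main:discrepancy} and \ref{main:transference}.

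With this reduction in place, the statement ``$Y$ is $\phi$-repulsive for $\phi(r) = cr^{k-d}/(\log r)^{\gamma}$'' reads, after substituting $r \asymp \|L(a)\|^{-1/d}$ and using $\log r \asymp \log|a|$, as the statement that
\[
\|L(a)\|_{\Z^{k-d}} \le C\,|a|^{-d/(k-d)}(\log|a|)^{-d\gamma/(k-d)}
\]
has only finitely many solutions $a \in \Z^d$. By the Khintchine--Groshev theorem applied to the $d \times (k-d)$ matrix $L$, this is governed by convergence of the series
\[
\sum_{q \ge 1} q^{d-1}\,\bigl(q^{-d/(k-d)}(\log q)^{-d\gamma/(k-d)}\bigr)^{k-d} \;=\; \sum_{q \ge 1} q^{-1}(\log q)^{-d\gamma},
\]
which converges precisely when $d\gamma > 1$. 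For $\gamma = 1/d$ (first assertion) one has $d\gamma = 1$, the series diverges, so for a.e.\ $L$ there are infinitely many such $a$, and each produces, at its critical scale, a pair of matching $r$-patches closer than $cr^{k-d}/(\log r)^{1/d}$; this shows $Y$ is not $\phi$-repulsive. For $\gamma = 1/d + (k-d) + \epsilon$ (second assertion) one has $d\gamma = 1 + d(k-d) + d\epsilon > 1$, the series converges, and Khintchine--Groshev yields only finitely many admissible $a$ for a.e.\ $L$; each such $a$ can only witness matching patches at scales bounded by some $r_0$, giving $\phi$-repulsivity for all $r > r_0$.

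The main obstacle I expect is the first step: establishing the equivalence between patch matching and the Diophantine inequality $\|L(a)\|_{\Z^{k-d}}\lesssim r^{-d}$ uniformly in $r$ and over all equivalence classes of patches, with the stated logarithmic loss. In particular one must rule out, for a.e.\ $L$, the existence of patches with pathologically small acceptance domains, which would allow a return vector to produce patch equivalence under a substantially weaker bound on $\|L(a)\|$ and so corrupt the exponents. Once this genericity is in hand -- the same sort of input required in Theorem~\ref{main:transference} -- the remaining Khintchine--Groshev calculation lands exactly at the critical exponent $|a|\cdot\|L(a)\|_{\Z^{k-d}}^{(k-d)/d}\asymp 1$ predicted by the Minkowski volume comparison in $\R^k$.
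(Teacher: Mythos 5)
Your first assertion (non-repulsivity) is essentially the paper's argument: divergence of $\sum q^{-1}(\log q)^{-1}$ in Khintchine--Groshev gives infinitely many $a$ with $\|L(a)\|\le c|a|^{-d/(k-d)}(\log|a|)^{-1/(k-d)}$, and a pigeonhole count shows that at the critical scale $r\asymp\|L(a)\|^{-1/d}$ some acceptance domain is a box with all sides $\gg r^{-d}$, so that $a$ (plus a suitable $F_\rho$-component) realises two equal $r$-patches at distance $\ll|a|\asymp r^{k-d}(\log r)^{-1/d}$. That half is fine.

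The second assertion has a genuine gap, and it is visible in the exponent. Your dictionary asserts that $\PP(y,r)=\PP(y',r)$ forces $\|L(a)\|\lesssim r^{-d}$ with an absolute constant. But patch equality only forces $\|L(a)\|$ to be at most the diameter of the acceptance domain containing both $y^*$ and $y'^*$, and for almost every $L$ these domains are \emph{not} uniformly of size $O(r^{-d})$: the cut points in each internal direction are a Kronecker orbit of $\asymp r^d$ points, whose maximal gap generically exceeds the mean gap by a power of $\log r$ infinitely often. The paper controls this by imposing that each $L_i$ is $\psi$-badly approximable with $\psi(q)=cq^{-d}(\log q)^{-(1+\epsilon)}$ (the best one can do generically, by the convergence case of Lemma~\ref{lem:Khintchine-Groshev}) and then invoking the transference principle (Lemma~\ref{lem:transference}) to bound the side lengths of the components of $\reg(r)$ above by $cr^{-d}(\log r)^{d(1+\epsilon)}$. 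Feeding that lossy bound, rather than $r^{-d}$, into the badly-approximable hypothesis on $L$ is exactly what produces the extra $(k-d)+\epsilon$ in the exponent of the theorem; your lossless dictionary would instead ``prove'' repulsivity for $\phi(r)\le cr^{k-d}(\log r)^{-1/d-\epsilon}$, a strictly stronger statement than the one claimed, and your convergence computation $\sum q^{-1}(\log q)^{-d\gamma}$ with $\gamma=1/d+(k-d)+\epsilon$ never uses the hypothesis that makes the weaker exponent necessary. Note also that your stated obstacle points the wrong way for this half: pathologically \emph{small} acceptance domains only make patch equality harder to achieve (that is the concern for the non-repulsivity direction); what must be ruled out here, and cannot be ruled out without a logarithmic loss, is pathologically \emph{large} acceptance domains. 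To repair the argument you need two generic Diophantine inputs, not one: an upper bound on acceptance domain diameters via transference applied to each row $L_i$, and a lower bound on $\|L(a)\|$ via the badly-approximable property of the full system $L$.
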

In Section \ref{sec:toolbox} we give a precise definition of a patch and gather together the lemmas and observations on cut and project sets and Diophantine approximation that will be necessary for our proofs. Sections \ref{sec:discrepancy proof} and \ref{sec:repetitivity} contain the proofs of the main theorems. In Section \ref{sec:Intrinsically defined patches and search regions} we consider other types of $r$-patches and discrepancy counts.

\subsection{Notation}
For $x \in \R$, ${\bf x} \in \R^m$
\begin{itemize}
\item $\norm{x}$ distance to the nearest integer.
\item $\abs{x}$ absolute value.
\item $\lfloor x \rfloor$ integer part.
\item $\abs{{\bf x}}=\max_{i=1, \dots, m}\abs{x_i}$.
\item $\norm{{\bf x}}=\max_{i=1, \dots, m}\norm{x_i}$.
\end{itemize}

We use the symbols $\ll$ and $\gg$ for the standard Vinogradov notation. When using this notation, if the implied constants depend on the variables involved, unless otherwise noted, this will be indicated by the use of subindices. For a measurable set $A \subseteq \R^m$, $|A|$ denotes the Lebesgue measure of $A$.

\section{Toolbox}\label{sec:toolbox}
\subsection{Patches in cut and project sets}

For $A \subseteq \R^m$, we define $A_\Z \coloneqq A \cap \Z^n$. We let $C(r) \coloneqq ([-r,r]^d \times \R^{k-d})_{\Z}$, that is, $C(r)$ is the cylinder of lattice points of the total space $\R^k$ whose first $d$ coordinates lie in the box $[-r,r]^d$. In addition to the internal space $F_\pi$ we will often be working with a {\bf reference space}, defined as $F_\rho \coloneqq \{0\}^d \times \R^{k-d}$. The decomposition $\R^k = E+F_\rho$ defines the projections $\rho$ and $\rho^*$ onto $E$ and $F_\rho$, respectively. Let $\W=\rho^*(\mathcal S)$, which we shall also refer to as the {\bf window}. It will often be necessary to consider linear maps as maps to the torus, and in these instances we use the corresponding calligraphic letters; for a linear map $L:\R^m\to \R^\ell$, say, we denote by $\mathcal L$ the mapping $L \mod 1:\R^m\to \R^\ell/\Z^\ell$.

Given $y \in Y$ and $r \in \R_+$, we define the {\bf $r$-patch at $y$} to be
\[
P(y,r) \coloneqq \{y' \in Y \mid \tilde{y}'-\tilde{y} \in C(r)\}
\]
where, for $y \in Y$, $\tilde{y}$ is the unique element of $\Z^k+s$ for which $\pi(\tilde{y}) = y$. So $P(y,r)$ consists of the points of $Y$ which are projections of points whose first $d$ coordinates differ from $\tilde{y}$ by at most $r$. While there are more geometrically intuitive notions of $r$-patches from the perspective of $Y$ as a Delone set, this definition is natural in terms of the cut and project scheme and will be technically simple to work with. As it turns out, we will show in Section \ref{sec:Intrinsically defined patches and search regions} that many of our results are not dependent upon the precise notion of $r$-patch used.

With the above notation, the estimate $\xi_\PP(y,R)$ of the frequency of equivalence class of $r$-patch $\PP$ at a point $y \in Y$ to distance $R$ is given as
\[
\xi_\PP(y,R) \coloneqq \frac{\#\{y' \in Y \mid \PP(y',r) = \PP \text{ and } \tilde{y}' - \tilde{y} \in C(R)\}}{\#\{y' \in Y \mid \tilde{y}' - \tilde{y} \in C(R)\}}.
\]
As with our definition of $r$-patch, the `search-region' about $y$ of points $y' \in Y$ with $\tilde{y}'-\tilde{y} \in C(R)$ has a somewhat extrinsic definition in terms of the cut and project scheme (note also that the number of such points is precisely $(2\lfloor R \rfloor + 1)^d$).
We prove results for their intrinsic counterparts in Section \ref{sec:Intrinsically defined patches and search regions}.

The integer lattice $\Z^k$ acts on $F_\rho$ by $n \cdot w \coloneqq \rho^*(n)+w$ for $n \in \Z^k$ and $w\in F_\rho$. For $r \in \R_+$ define the set of {\bf $r$-singular points} as
\[
\sing (r) = \W\cap (C(r) \cdot \partial \W).
\]
The {\bf $r$-regular points} are defined to be $\reg(r)=\W\setminus \sing(r)$. For $y \in Y$, we define $y^* \coloneqq \rho^*(\tilde{y})$. The map $y \mapsto y^*$ is sometimes called the {\bf star map}. It is instructive to observe that the lift of an $r$-patch $P(y,r)$ to the total space does not intersect the boundary of the strip $\mathcal{S}$ precisely when $y^*$ is $r$-regular. The following result relates the connected components of $\reg(r)$ to the collection of patches of size $r$, and will be an essential ingredient in the proofs of Theorems \ref{main:discrepancy}, \ref{main:transference} and \ref{thm:repulsive}. The lemma is formulated in \cite{HaynKoivWalt2015a}, and the proof can be found in \cite[Lemma 3.2]{HaynKoivSaduWalt2015}.
\begin{lemma}[Lemma 2.4 of \cite{HaynKoivWalt2015a}] \label{lem:connected components}
For a regular, cubical cut and project set, for every equivalence class $\PP$ of $r$-patches there is a unique connected component $Q$ of $\reg (r)$ such that, for any $y \in Y$,
\[
\PP(y, r) =\PP \textrm{ if and only if } y^* \in Q.
\]
\end{lemma}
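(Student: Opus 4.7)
The plan is to identify each patch equivalence class with a combinatorial invariant of the star image and analyse its dependence on $w \in \reg(r)$. For $w \in \W$, set
\[
T(w) \coloneqq \{n \in C(r) : w + \rho^*(n) \in \W\}.
\]
Because $\tilde y + n \in \mathcal S$ if and only if $\rho^*(\tilde y + n) = y^* + \rho^*(n) \in \W$, the lift of $P(y,r)$ to $\R^k$ equals $\tilde y + T(y^*)$, so $P(y,r) - y = \pi(T(y^*))$. Injectivity of $\pi|_{\Z^k}$ then shows that the equivalence class $\PP(y,r)$ is in bijection with the finite set $T(y^*) \subseteq C(r)$, so it suffices to prove that $T$ is constant on each connected component of $\reg(r)$ and distinguishes distinct components.

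First I would show $w \mapsto T(w)$ is locally constant on $\reg(r)$. Regularity means $w + \rho^*(n) \notin \partial \W$ for every $n \in C(r)$, hence each such point lies either in $\W^\circ$ or in $F_\rho \setminus \overline \W$, both open in $F_\rho$; since only finitely many $n \in C(r)$ have $w + \rho^*(n)$ within any bounded neighbourhood of $\W$, small perturbations of $w$ preserve the entire set $T(w)$. Therefore $T$ is constant on every connected component of $\reg(r)$, and the map component $\mapsto$ patch class is well defined.

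The remaining step is to verify injectivity of this map. Writing $T_\PP$ for the subset of $C(r)$ associated to a patch class $\PP$, its locus in $\W$ is
\[
A_{T_\PP} = \Bigl(\bigcap_{n \in T_\PP}(\W - \rho^*(n))\Bigr) \,\setminus\, \bigcup_{n \notin T_\PP}(\W - \rho^*(n))^{\circ},
\]
and I want to conclude that $A_{T_\PP} \cap \reg(r)$ is connected. The cubical structure of $\W$ is essential here: the first factor is a convex axis-aligned box $B_\PP$ expressed as an intersection of translated unit cubes, while each removed set is itself an axis-aligned unit cube; the defining condition of $T_\PP$ prevents every removed cube from containing $B_\PP$, hence from spanning it in every coordinate direction simultaneously. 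The main obstacle is then to prove that an axis-aligned box with a family of axis-aligned sub-cubes removed, each failing to span in some coordinate, remains connected. I would argue this by induction on the internal dimension $k-d$: projecting onto a coordinate direction along which some removed cube fails to span, one reduces to linking surviving lower-dimensional fibres through the complement of that cube, with the base case $k-d=1$ being an elementary interval argument. Without cubicity this final step can fail, which is why the lemma is formulated for cubical windows.
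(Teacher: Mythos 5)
Your reduction of the lemma to the combinatorial map $w \mapsto T(w)$, and the verification that $T$ is locally constant on $\reg(r)$ (hence constant on connected components, so that each component determines a single patch class), are correct and constitute the easy half of the argument. The gap is in the final step. The geometric claim you reduce to --- that an axis-parallel box $B$ with side lengths at most $1$, minus a family of translates of the unit cube none of which contains $B$ (equivalently, each failing to span $B$ in some coordinate), remains connected --- is false once $k-d \geq 2$. Take $B = [0,1]^2$ and remove $R_1 = ([0.4,1.4]\times[-0.4,0.6]) \cap B$ and $R_2 = ([-0.4,0.6]\times[0.4,1.4]) \cap B$. Both are unit-cube translates meeting $B$ in proper corner regions, yet $B \setminus (R_1 \cup R_2)$ has two components, one containing $(0.2,0.2)$ and one containing $(0.8,0.8)$: on any path between them there is a point with first coordinate $0.5$, and avoiding $R_1$ there forces $y > 0.6$ while avoiding $R_2$ forces $y < 0.4$. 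So the induction on $k-d$ you sketch cannot close, and cubicity of the window alone is not the reason the lemma holds.

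What saves the statement is structure your argument never invokes: the set $\{\rho^*(n) : n \in C(r)\}$ is invariant under translation by $\{0\}^d \times \Z^{k-d}$, since membership of $n$ in $C(r)$ constrains only its first $d$ coordinates. This has two consequences. First, $\sing(r)$ is the intersection of $\W$ with a finite union of \emph{full} axis-parallel hyperplanes $\{w_j = c\}$ (each bounded face-translate in the singular set can be translated by $\{0\}^d\times\Z^{k-d}$ to sweep out the whole hyperplane), so the components of $\reg(r)$ form a grid of open boxes --- a fact the paper relies on later, in the proof of Theorem~\ref{main:discrepancy}, where $Q$ is asserted to be an axes-parallel box. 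Second, given two distinct grid cells, one can pick a cut value $c$ in some coordinate $j$ separating them and an $n \in C(r)$ producing that cut, and then adjust $n$ by an element of $\{0\}^d \times \Z^{k-d}$ (remaining in $C(r)$) coordinate by coordinate so that $\W - \rho^*(n)$ contains one cell and misses the other; hence $T$ separates distinct cells. In your counterexample this invariance forces further removed cubes, such as $([0.4,1.4]\times[0.6,1.6]) \cap B$, which delete all but one of the would-be components, so $A_{T_\PP}$ is indeed a single cell. Note finally that the paper itself does not prove this lemma but cites \cite{HaynKoivWalt2015a} and \cite{HaynKoivSaduWalt2015} for the argument, which proceeds along the lines just described.
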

We call $Q$ the {\bf acceptance domain} of the patch $\PP$. Through this lemma, and $E$ being totally irrational, an application of the Birkhoff Ergodic Theorem gives the following lemma.
\begin{lemma}[Lemma 3.2 of \cite{HaynKoivSaduWalt2015}]\label{lem:asymptotic}
The frequency $\xi_\PP$ of an equivalence class $\PP$ of $r$-patches is equal to $|Q|$, where $Q$ is the connected component $\reg(r)$ corresponding to $\PP$ of Lemma \ref{lem:connected components}.
\end{lemma}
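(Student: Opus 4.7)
The plan is to combine Lemma~\ref{lem:connected components} with a Birkhoff/Weyl equidistribution argument applied to the star map. Since the frequency (if it exists) does not depend on $s$ or $y$, I would take $\tilde y = s$. By Lemma~\ref{lem:connected components}, the numerator of $\xi_\PP(y,R)$ is the number of $y' \in Y$ with $\tilde y' - s \in C(R)$ and $y'^* \in Q$. Parametrising such $y'$ by the first $d$ coordinates $m$ of $\tilde y' - s$, which range over $\Z^d \cap [-R,R]^d$ and determine $y'$ uniquely (this is what yields the denominator $(2\lfloor R \rfloor + 1)^d$), one gets
\[
\xi_\PP(y,R) = \frac{\#\{m \in \Z^d \cap [-R,R]^d : y'^*_m \in Q\}}{(2\lfloor R \rfloor + 1)^d},
\]
where $y'_m$ is the unique point of $Y$ corresponding to the index $m$.

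Next I would write the star map in coordinates. With $\R^k = \R^d \times \R^{k-d}$, $E$ the graph of $L$, and $s = (s_1, s_2)$, a short calculation gives $y'^*_m = (0, s_2 + m' - L(s_1 + m))$ for the unique $m' \in \Z^{k-d}$ forcing this expression to lie in $\W = \{0\}^d \times [0,1]^{k-d}$. Under the natural identification of $\W$ with the torus $\R^{k-d}/\Z^{k-d}$, the star image corresponds to the class of $s_2 - \mathcal L(s_1 + m)$, and the acceptance domain $Q$ corresponds to a subset of the torus of the same Lebesgue measure $|Q|$.

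The final step is to invoke unique ergodicity of the $\Z^d$-action on $\R^{k-d}/\Z^{k-d}$ by translation by $\mathcal L(e_1), \dots, \mathcal L(e_d)$. Total irrationality of $E$ is precisely the statement that this action has dense orbits, hence it is minimal and (by the standard fact for torus translations) uniquely ergodic with respect to Haar measure. Birkhoff's ergodic theorem applied to the indicator of the translate $s_2 - Q$ along the sequence of boxes $[-R,R]^d \cap \Z^d$ then yields
\[
\lim_{R \to \infty} \xi_\PP(y,R) = |s_2 - Q| = |Q|,
\]
as required.

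The main obstacle is ensuring convergence for the indicator of $Q$ rather than a continuous test function: this needs $\partial Q$ to be Haar-null on the torus. In the cubical setting this holds immediately, since $Q$ is an open polytopal region cut out from $\W$ by finitely many translates of the affine hyperplanes making up $\partial \W$, so $\partial Q$ is a finite union of pieces of lower-dimensional hyperplanes. A minor technical point to verify is that the $o(R^d)$ discrepancy between integer points counted with the indicator of $s_2 - Q$ and the approximation used near the boundary of $[-R,R]^d$ does not affect the limit, which is a routine boundary estimate.
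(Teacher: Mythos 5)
Your proposal is correct and follows essentially the same route the paper indicates: it combines Lemma~\ref{lem:connected components} with total irrationality of $E$ and a Birkhoff/unique-ergodicity argument for the $\Z^d$-translation action on the torus $\R^{k-d}/\Z^{k-d}$, which is exactly the argument the paper attributes to Lemma 3.2 of \cite{HaynKoivSaduWalt2015}. Your attention to the two standard technical points --- passing from continuous test functions to the indicator of $Q$ via the Haar-nullity of $\partial Q$, and upgrading from almost-everywhere to everywhere convergence via unique ergodicity --- fills in precisely the details the paper leaves to the cited reference.
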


\subsection{Discrepancy}

For a sequence $(x_n)_{n \in \N}$ of $\R^m$ and a measurable set $A \subseteq \R^m$, we define the {\bf discrepancy}
\[
D_N(A)=\left |\sum_{n=1}^N\chi_A(x_n) - N|A|\right |
\]
where $\chi_A$ stands for the characteristic function of $A$. The proof of Theorem \ref{main:discrepancy} hinges on estimates of this quantity. The following theorem is proved as \cite[Theorem 5.21]{Harm1998}, or in the current form \cite[p. 116]{KuipNied1974}.
\begin{lemma}[Erd\"os--Turan--Koksma inequality] \label{lem:A-T ineq}
Let $(x_n)_{n \in \N}$ be a sequence in $\R^m$. For any $L, N\in \N$,
\[
\sup_A \Big(\frac{D_N(A)}{N} \Big) \le C_m\left (\frac 1L + \sum_{\substack{0<\abs{h}\le L\\h\in\Z^m}}r(h)\left | \frac{1}{N} \sum_{n=1}^N\exp(2\pi i\inner{h} {x_n}) \right|\right),
\]
where the supremum is taken over all axes parallel boxes $A$,
\[
r(h)^{-1}=\prod_{j=1}^m \max\{1, \abs{h_j}\}
\]
for $h\in \R^m$, and $C_m$ is a constant only depending on $m$.

\end{lemma}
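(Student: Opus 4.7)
The plan is to prove this via the classical one-dimensional trick (trigonometric majorants and minorants for an interval) combined with a tensor product construction to handle axes-parallel boxes in $\R^m$. Since the discrepancy $D_N(A)$ is invariant under integer translations of the $x_n$, I first reduce to the torus $\R^m/\Z^m$, so that the sequence lies in $[0,1)^m$ and $A$ may be taken to be a sub-box of $[0,1)^m$. From here, everything becomes a question about Fourier series on the torus.

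The heart of the argument is Selberg's construction (due ultimately to Beurling) of trigonometric majorants $S_L^+$ and minorants $S_L^-$ of degree $L$ for the characteristic function $\chi_{[a,b]}$ of an interval, with the property that
\[
0 \le \int_0^1 (S_L^+(x) - \chi_{[a,b]}(x))\,dx \ll \frac{1}{L+1}, \qquad 0 \le \int_0^1 (\chi_{[a,b]}(x) - S_L^-(x))\,dx \ll \frac{1}{L+1},
\]
and whose nonzero Fourier coefficients at frequency $h \ne 0$ are bounded by $\min\{1/|h|, (b-a)\}$. One then takes tensor products $S_L^{\pm}(x_1)\cdots S_L^{\pm}(x_m)$ to obtain trigonometric majorants and minorants of $\chi_A$ of coordinate-wise degree $L$ whose $L^1$ error is $O_m(1/L)$ and whose Fourier coefficient at $h = (h_1,\dots,h_m) \in \Z^m$ is bounded by $\prod_j \min\{1/|h_j|, 1\} = r(h)$ (with the convention $1/0 = 1$).

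Using these approximations, I would sandwich the empirical count against the majorant and minorant:
\[
\sum_{n=1}^N S_L^-(x_n) \;\le\; \sum_{n=1}^N \chi_A(x_n) \;\le\; \sum_{n=1}^N S_L^+(x_n),
\]
and compare each side to $N|A|$ via the explicit Fourier expansion of $S_L^\pm$. The zero-frequency term contributes $N \int S_L^{\pm}$, which differs from $N|A|$ by at most $O_m(N/L)$. Every other term $\hat{S}_L^{\pm}(h) \sum_n e^{2\pi i \langle h, x_n\rangle}$ is bounded in absolute value by $r(h) \left|\sum_n e^{2\pi i \langle h, x_n\rangle}\right|$ and vanishes outside $|h|_\infty \le L$. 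Combining both bounds, dividing by $N$, and taking the supremum over boxes $A$ yields the claimed inequality, with $C_m$ absorbing the implied constants from the tensor product construction.

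The main technical obstacle is the sharp one-dimensional majorant/minorant construction, which is nontrivial (it essentially requires Beurling's extremal function or Vaaler's variant) but is by now entirely classical; the rest of the proof is bookkeeping with Fourier series and the triangle inequality. An alternative, slightly weaker-constant route that avoids Selberg's construction is to use the Fejér kernel to smooth $\chi_A$, giving the same form of bound up to the value of $C_m$, which is all that is needed here.
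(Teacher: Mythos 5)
The first thing to note is that the paper does not actually prove this lemma: it is imported from the literature (Harman, Theorem~5.21; Kuipers--Niederreiter, p.~116), so your proposal can only be compared with the standard proofs found there, and those do proceed essentially as you outline, via Selberg--Beurling--Vaaler trigonometric approximants of intervals tensored up to boxes, followed by Fourier bookkeeping. The majorant half of your argument is sound: since $S_L^+ \ge \chi_{[a,b]} \ge 0$, the product of the one-dimensional majorants does majorize $\chi_A$, its integral is within $O_m(1/L)$ of $|A|$, and its nonzero Fourier coefficients are $\ll_m r(h)$ and supported on $\abs{h} \le L$ (the extra $1/(L+1)$ in Selberg's coefficient bound is harmless there since $\abs{h_j} \le L$).

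The genuine gap is in the minorant. Selberg's minorant satisfies $S_L^- \le \chi_{[a,b]}$, hence $S_L^- \le 0$ off the interval, and, being a nonzero trigonometric polynomial, it is strictly negative on a set of positive measure there. Consequently, for $m \ge 2$ the tensor product of the one-dimensional minorants is \emph{not} a minorant of $\chi_A$: at a point $x \notin A$ where an even (nonzero) number of the factors are negative and the rest positive, the product is positive while $\chi_A(x) = 0$. The sandwich inequality $\sum_n \prod_j S^-_j((x_n)_j) \le \sum_n \chi_A(x_n)$ therefore fails as written. The standard repair (used, e.g., by Barton--Montgomery--Vaaler and in Cochrane's treatment of the multidimensional inequality) is to replace the product by the combination
\[
\sum_{j=1}^m S_j^-(x_j) \prod_{i \ne j} S_i^+(x_i) \; - \; (m-1) \prod_{j=1}^m S_j^+(x_j) \; \le \; \prod_{j=1}^m \chi_{I_j}(x_j),
\]
where $S_j^{\pm}$ are the degree-$L$ majorant and minorant of the $j$-th side $I_j$ of $A$; the inequality follows by telescoping $\prod_j S_j^+ - \prod_j \chi_{I_j}$ and bounding each term using $0 \le \chi_{I_j} \le S_j^+$ and $S_j^+ - \chi_{I_j} \le S_j^+ - S_j^-$. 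The left-hand side is again a trigonometric polynomial of degree at most $L$ in each variable, with integral $|A| + O_m(1/L)$ and Fourier coefficients $\ll_m r(h)$, so your argument goes through once this substitution is made. The same issue would resurface in your Fej\'er-kernel alternative, which does not produce exact majorants or minorants either and requires an additional argument to control the points $x_n$ near $\partial A$.
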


\subsection{Diophantine approximation}

Let $\psi: \N\to \R_+$ be a decreasing function. We say that a linear map $L:\R^m\to \R^n$ is {\bf $\psi$-badly approximable}, and write $L\in \B(\psi)$, if for some constant $C>0$
\[
\norm{L(q)} \ge C \psi(\abs{q}) \textrm{ for all } q \in \Z^m\setminus \{0\}.
\]
On the other hand, we say that it is {\bf $\psi$-well approximable}, and write $L\in \mathcal E(\psi)$, if $\norm{L(q)} \le \psi(\abs{q})$ for infinitely many $q\in \Z^m$. The Khintchine--Groshev theorem below connects the measure of $\mathcal{E}(\psi)$ to the speed of decay of $\psi$. The proof may be found in \cite [Section~12.1]{BerDicVel06}.

\begin{lemma}[Khintchine--Groshev]  \label{lem:Khintchine-Groshev}
The set $\mathcal E(\psi)$ has either full Lebesgue measure or measure $0$ according to whether the sum
\[
\sum_{r=1}^\infty r^{m-1}\psi(r)^{n}
\]
diverges or converges, respectively.
\end{lemma}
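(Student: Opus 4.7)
The plan is to prove the two halves separately: the convergence direction follows from the first Borel--Cantelli lemma, while the divergence direction requires a pairwise quasi-independence estimate combined with a zero-one law. Parametrise $L:\R^m\to \R^n$ by the matrix of coefficients $A=(\alpha_{ij})\in \R^{mn}$, and for $q\in \Z^m\setminus \{0\}$ consider the event $E_q \coloneqq \{A : \norm{Aq}\le \psi(\abs q)\}$. Since $E_q$ is invariant under integer translations of $A$, it suffices to work in the fundamental cube $[0,1]^{mn}$. For fixed nonzero $q$, each row $a_i$ of $A$ appears only through the linear functional $a_i\mapsto a_i\cdot q$, which modulo $1$ pushes Lebesgue measure on $[0,1]^m$ forward to Lebesgue measure on $\R/\Z$; the rows being independent, $\abs{E_q}=(2\psi(\abs q))^n$ whenever $\psi(\abs q)\le 1/2$.

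For the convergence direction, collecting lattice points by sup-norm yields
\[
\sum_{q\in \Z^m\setminus\{0\}} \abs{E_q} \asymp \sum_{r=1}^\infty r^{m-1}\psi(r)^n,
\]
so when this sum is finite the first Borel--Cantelli lemma gives $\abs{\limsup_q E_q}=0$, and $\E(\psi)$ is null.

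For the divergence direction, the goal is the Chung--Erd\"os form of Borel--Cantelli, which requires a quasi-independence estimate $\abs{E_q\cap E_{q'}}\le C\abs{E_q}\abs{E_{q'}}$. Assuming $m\ge 2$ and that $q,q'\in \Z^m\setminus\{0\}$ are $\Q$-linearly independent, on each row $a_i$ the two linear forms $a_i\cdot q$ and $a_i\cdot q'$ are $\R$-linearly independent, so the pushforward of Lebesgue measure under $(a_i\cdot q, a_i\cdot q')\bmod 1$ equidistributes on $(\R/\Z)^2$, yielding the row-wise and hence the full matrix product bound. Degenerate pairs with $q,q'$ proportional (including the entire $m=1$ regime) are reduced to primitive vectors and handled by a one-dimensional Khintchine argument (continued fractions when $m=1$, or elementary Fourier expansions otherwise). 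The divergence Borel--Cantelli then gives $\abs{\limsup_q E_q}>0$, and a Cassels-type zero-one law, exploiting the $\Z^{mn}$-translation invariance of $\limsup_q E_q$ together with rational dilations of $A$, upgrades positive measure to full measure.

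The main obstacle is the quasi-independence estimate, especially its treatment of collinear pairs. This is precisely what traditionally forces the classical Khintchine theorem ($m=n=1$) to be established separately via continued fractions before the generic $m\ge 2$ case can be tackled by the independence-of-rows trick sketched above; organising the bookkeeping so that the contribution of non-primitive multiples is absorbed into a constant, rather than destroying the divergence hypothesis, is the delicate step.
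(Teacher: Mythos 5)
The paper does not prove this lemma at all: it is the classical Khintchine--Groshev theorem, quoted with a pointer to \cite[Section~12.1]{BerDicVel06}, so there is no in-paper argument to compare yours against. Judged on its own terms, your outline reproduces the standard architecture of the classical proof, and the pieces you actually spell out are sound: the measure computation $\abs{E_q}=(2\psi(\abs q))^n$ (valid once $\psi(\abs q)\le 1/2$), the convergence half via the first Borel--Cantelli lemma, and the row-wise factorisation for $\mathbb{Q}$-linearly independent $q,q'$. In fact that last step gives you more than you claim: the pushforward of Lebesgue measure under $a_i\mapsto (a_i\cdot q, a_i\cdot q')\bmod 1$ is \emph{exactly} Lebesgue measure on $(\R/\Z)^2$, since the Fourier coefficients $\int e^{2\pi i\, a\cdot(kq+k'q')}\,da$ vanish unless $kq+k'q'=0$; so such pairs of events are genuinely independent, not merely quasi-independent. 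For $m\ge 2$ one restricts to primitive $q$ with a fixed sign convention, so that all remaining pairs are independent, and the assumed monotonicity of $\psi$ guarantees the restricted sum still diverges; the bookkeeping of non-primitive multiples that worries you is therefore not the real obstacle in that regime.

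The genuine gaps are exactly the two ingredients you defer. First, the case $m=1$ --- which the paper does need, since $d=1$ is allowed and the lemma is applied with $m=d$ --- is the simultaneous Khintchine theorem: there every pair $q,q'$ is collinear, the independence trick is unavailable, and continued fractions only handle $n=1$; for $n\ge 2$ one must carry out the overlap estimate for the grids of boxes constituting $E_q\cap E_{q'}$, with explicit control in terms of $\gcd(q,q')$, and this is a substantial computation rather than a routine reduction. Second, the zero-one law that upgrades $\abs{\limsup_q E_q}>0$ to full measure is itself a nontrivial theorem of Cassels/Gallagher type; naming ``translation invariance together with rational dilations'' identifies the right mechanism but does not prove it. So as a roadmap your proposal is faithful to how the theorem is proved in the literature, but as a self-contained argument it reduces the statement to two other results of comparable depth, which is presumably why the authors chose to cite the theorem rather than prove it.
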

Notice that this gives the corresponding zero-one law for $\B(\psi)$ as well. The property of $L$ being $\psi$-badly approximable can be converted to well-distribution properties of the orbit of $L$ via the following transference principle:

\begin{lemma}[Theorem VI of Section V in \cite{Cass57}]\label{lem:transference}
Suppose that for some $\psi$ and $X>0$, there is no $n \in \Z^d\setminus\{0\}$ satisfying simultaneously
\[
\|L(n)\|\le \psi \textrm{ and } |n|\le X.
\]
Then for all $\gamma\in \R^{k-d}$, there is $n\in \Z^d$ with
\[
\|L(n) - \gamma\|\le c \textrm{ and }|n|\le R,
\]
where
\[
 c = \tfrac 12 (h+1)\psi, \quad R=\tfrac 12 (h+1)X, \quad \textrm{and}\quad h=\lfloor X^{-d}\psi^{d-k}\rfloor.
\]
\end{lemma}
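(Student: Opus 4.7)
The plan is to recast the statement as a covering-radius bound for a unimodular lattice and derive it from Minkowski's theorems of the geometry of numbers.

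\emph{Lattice reformulation.} Associate to $L$ the lattice
\[
\Lambda \coloneqq \{(n,\, L(n) - m) : n \in \Z^d,\, m \in \Z^{k-d}\} \subset \R^k,
\]
of covolume $1$, together with the symmetric convex box $K \coloneqq [-X,X]^d \times [-\psi,\psi]^{k-d}$, of volume $2^k X^d\psi^{k-d}$. The homogeneous hypothesis is precisely the statement $\Lambda \cap K = \{0\}$, i.e.\ the first successive minimum of $\Lambda$ with respect to $K$ satisfies $\lambda_1 \ge 1$. The desired conclusion, lifted from the torus back to $\R^k$, asserts that every translate $\Lambda - (0,\gamma)$ meets the dilated box $\tfrac{h+1}{2}K$; equivalently, the covering radius of $\Lambda$ with respect to $K$ is at most $(h+1)/2$.

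\emph{Short vectors via Minkowski's second theorem.} Let $\lambda_1 \le \cdots \le \lambda_k$ be the successive minima of $\Lambda$ with respect to $K$. Minkowski's second theorem yields $\lambda_1\lambda_2\cdots\lambda_k \cdot \mathrm{vol}(K) \le 2^k$, and combined with $\lambda_i \ge \lambda_1 \ge 1$ this forces
\[
\lambda_k \le \lambda_1\lambda_2\cdots\lambda_k \le \frac{1}{X^d \psi^{k-d}} < h+1
\]
by the definition of $h$. Thus there exist linearly independent $v_1,\ldots,v_k \in \Lambda \cap (h+1)K$, spanning a sublattice $\Lambda' \subseteq \Lambda$, so that $\mu(\Lambda, K) \le \mu(\Lambda', K)$.

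\emph{Sharp covering bound -- the main obstacle.} The remaining task is to show $\mu(\Lambda', K) \le (h+1)/2$. A naive argument writes each $w \in \R^k$ as $w = \lambda' + \sum t_i v_i$ with $\lambda' \in \Lambda'$ and $|t_i| \le 1/2$, but the triangle inequality applied with $v_i \in (h+1)K$ only delivers the crude bound $\mu(\Lambda', K) \le \tfrac{k(h+1)}{2}$, off by a factor of $k$. Closing this gap is precisely the delicate step in Cassels's argument in \cite{Cass57}: one exploits the product structure of $K$ together with the integrality of the first $d$ coordinates of $\Lambda$, processing the chain of quotient lattices $\Lambda/\langle v_1\rangle, \Lambda/\langle v_1, v_2\rangle,\ldots$ by greedily replacing each $v_i$ with a short coset representative before passing to the next quotient. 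This collapses the $k$-fold telescoping triangle inequality into a single tight scalar estimate and recovers the stated constants. Since the lemma is cited verbatim from Cassels's book, in practice I would invoke his version directly rather than reprove the sharp reduction from scratch.
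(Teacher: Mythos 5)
The paper offers no proof of this lemma at all --- it is quoted verbatim from Cassels \cite{Cass57} --- so the only question is whether your sketch amounts to an independent proof. It does not: the step you yourself flag as ``the main obstacle'' is genuinely missing, and the mechanism you propose for closing it cannot deliver the stated constant. The greedy quotient-lattice procedure you describe (replace each $v_i$ by a short coset representative, pass to the next quotient) is precisely the standard proof of Jarn\'ik's transference inequality $\mu(\Lambda,K)\le\tfrac12(\lambda_1+\cdots+\lambda_k)$. Feeding in $\lambda_i\ge 1$ and $\lambda_1\cdots\lambda_k\le X^{-d}\psi^{-(k-d)}<h+1$, the sum $\lambda_1+\cdots+\lambda_k$ is maximised when all but one of the minima equal $1$, which yields only $\mu\le\tfrac12\bigl(k-1+X^{-d}\psi^{-(k-d)}\bigr)<\tfrac12(h+k)$. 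For $k\ge 2$ this is strictly weaker than the claimed $\tfrac12(h+1)$, so the assertion that the quotient argument ``recovers the stated constants'' is false; no bookkeeping with successive minima alone produces the sharp bound. Cassels's argument is of a different nature: since $\Lambda\cap K=\{0\}$, the translates $\lambda+\tfrac12K$, $\lambda\in\Lambda$, have pairwise disjoint interiors, so $\tfrac12K$ injects into $\R^k/\Lambda$ with image of measure $X^d\psi^{k-d}$; a sum-set inequality on the torus (of Macbeath--Kneser type) then gives that the $(h+1)$-fold sumset $\tfrac{h+1}{2}K$ has image of measure $\min\{1,(h+1)X^d\psi^{k-d}\}=1$, and, being compact, it therefore covers $\R^k/\Lambda$ --- which is exactly the conclusion. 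Minkowski's second theorem is not needed anywhere.

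Two mitigating remarks. First, for the way Lemma~\ref{lem:transference} is actually used in Sections \ref{sec:discrepancy proof} and \ref{sec:repetitivity} (where $h\to\infty$ and only the order of magnitude of $c$ and $R$ matters), the weaker constant $\tfrac12(h+k)$ would serve equally well --- but you would then still owe a proof of Jarn\'ik's inequality, which your sketch asserts rather than establishes. Second, your closing sentence (``invoke his version directly'') is in effect what the paper does; if that is the intent, the preceding partial argument should be dropped rather than presented as if it were a proof.
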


\section{Quantitative estimates for frequencies of patterns}\label{sec:discrepancy proof}
In this section we prove Theorem \ref{main:discrepancy}, which bounds the rate of convergence of the estimates $\xi_\PP(y,R)$ to the asymptotic frequencies $\xi_\PP$. The proof will incorporate tools from discrepancy theory, reviewed in Section \ref{sec:toolbox}. We start with a technical lemma.
\begin{lemma}\label{lem:technical}
For almost every matrix $(\alpha_{ij}) \in \R^{d(k-d)}$, 
for any $\epsilon > 0$, we have that
\begin{equation}\label{eq:sum-lemma-to-bound}
\sum_{\substack{0<\abs{h}\le H\\h\in\Z^{k-d}}} r(h) \prod_{i=1}^d \| \inner {h} {(\alpha_{ij})_{j=1}^{k-d}} \|^{-1} \ll_{\epsilon, \alpha} (\log H)^{k+\epsilon},
\end{equation}
where $\inner h {(\alpha_{ij})_j}$ denotes the inner product of the vector $h \in \Z^{k-d}$ with the $i$-th row of $(\alpha_{ij})$, and
$r$ is defined as in the statement of Lemma~\ref{lem:A-T ineq},
\[
r(h)^{-1}=\prod_{j=1}^m \max\{1, \abs{h_j}\}.
\]
\end{lemma}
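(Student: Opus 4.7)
The plan is to bound the sum by its expected value over a random matrix and then pass from average to pointwise control via Borel--Cantelli. Denote the sum in \eqref{eq:sum-lemma-to-bound} by $S(H)$. By periodicity of $\|\cdot\|$ the summand depends only on $(\alpha_{ij}) \bmod \Z^{d(k-d)}$, so I may regard $A := (\alpha_{ij})$ as uniformly distributed on $[0,1)^{d(k-d)}$; the $d$ rows $\alpha_i \in [0,1)^{k-d}$ are then independent. Since $\|x\|^{-1}$ is not integrable near $0$, a raw expectation would diverge, so I first truncate: for $T>0$ set
\[
\tilde S(H;T) := \sum_{\substack{h \in \Z^{k-d} \\ 0 < |h| \le H}} r(h)\prod_{i=1}^d \min\bigl(\|\alpha_i \cdot h\|^{-1}, T\bigr).
\]
Applying the convergence half of Lemma~\ref{lem:Khintchine-Groshev} to each row separately with $\psi(r) = r^{-(k-d)}(\log r)^{-(1+\epsilon)}$ (so that $\sum r^{k-d-1}\psi(r) < \infty$) yields that for almost every matrix and each row $i$, $\|\alpha_i \cdot h\|^{-1} \le |h|^{k-d}(\log|h|)^{1+\epsilon}$ for all but finitely many $h$. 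Choosing $T_H := H^{k-d+1}$ thus makes the truncation inactive on every $|h|\le H$ outside a finite exceptional set, whence $S(H) - \tilde S(H; T_H) = O_A(1)$.

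Next I would compute the expectation. For each $h \ne 0$ the map $\alpha_i \mapsto \alpha_i \cdot h \bmod 1$ is a measure-preserving homomorphism onto $[0,1)$, so a one-dimensional calculation gives $\int_{[0,1)^{k-d}} \min(\|\alpha_i \cdot h\|^{-1}, T)\,d\alpha_i = \int_0^1 \min(\|x\|^{-1}, T)\,dx = 2\log T + O(1)$. Combining this with the independence of the rows and Fubini,
\[
\int \tilde S(H; T_H)\,dA = \sum_{0<|h|\le H} r(h)\bigl(2\log T_H + O(1)\bigr)^d \ll (\log H)^d \sum_{0<|h|\le H} r(h) \ll (\log H)^k,
\]
where the final step uses the elementary estimate $\sum_{0<|h|\le H} r(h) \ll (\log H)^{k-d}$, itself obtained by factoring the sum coordinate-by-coordinate. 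Markov's inequality then yields $\bigl|\{A : \tilde S(H;T_H) > (\log H)^{k+\epsilon}\}\bigr| \ll (\log H)^{-\epsilon}$.

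The main obstacle is that $(\log H)^{-\epsilon}$ is not summable along a geometric sequence in $H$, so a naive Borel--Cantelli argument would only deliver the weaker exponent $k+1+\epsilon$. To recover the sharp exponent I would test the conclusion along the rapidly growing sequence $H_m := 2^{2^m}$, for which $(\log H_m)^{-\epsilon}$ decays like $2^{-m\epsilon}$ and is hence summable in $m$ for every $\epsilon>0$. Borel--Cantelli then gives that for almost every matrix, $\tilde S(H_m; T_{H_m}) \le (\log H_m)^{k+\epsilon}$ for all sufficiently large $m$. For arbitrary $H \in [H_{m-1}, H_m]$, monotonicity of $\tilde S$ in both arguments gives $\tilde S(H; T_H) \le \tilde S(H_m; T_{H_m})$, and since $\log H_m = 2\log H_{m-1} \le 2\log H$ this is $\ll (\log H)^{k+\epsilon}$. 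Combined with $S(H) - \tilde S(H; T_H) = O_A(1)$, this yields the claim.
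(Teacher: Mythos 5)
Your argument is correct, but it reaches the estimate by a genuinely different mechanism than the paper. The paper also runs a first-moment argument, but instead of truncating it regularises: it multiplies the summand by the weights $(\log h_j)^{-(1+\delta)}$ and $|\log\|\langle h,\alpha\rangle\||^{-(1+\delta)}$, shows the resulting \emph{infinite} weighted sum has finite expectation (via the uniform bound $J(h)\ll 1$ obtained from a change of variables), and hence is a.e.\ finite; the target partial sum is then bounded by this a.e.-finite constant times the maximum of the stripped-off logarithmic factors, each of which Khintchine--Groshev controls by a power of $\log H$. That route goes from ``finite expectation'' to ``a.e.\ pointwise bound'' in one step, with no need for Markov, a sparse subsequence, or interpolation; its cost is the bookkeeping over sign patterns (the sets $A_S$) and the slightly delicate extraction of the $\max$ of the log factors. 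Your route truncates at $T_H=H^{k-d+1}$, computes the expectation of the finite partial sum directly (correctly getting $\ll(\log H)^k$ from $2\log T_H+O(1)$ per row times $\sum_{0<|h|\le H}r(h)\ll(\log H)^{k-d}$), and then must pay for the non-summability of $(\log H)^{-\epsilon}$ with the Borel--Cantelli argument along $H_m=2^{2^m}$ plus monotonicity of $\tilde S$ in both arguments --- all of which you handle correctly, and which is where your $\epsilon$-loss enters (versus the paper's, which enters through the exponent $1+\delta$ in the weights). Khintchine--Groshev appears in both proofs but plays different roles: in the paper it bounds $\max_{|h|\le H}|\log\|\langle h,\alpha\rangle\||$, in yours it only certifies that the truncation is inactive outside a finite set of $h$. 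One minor point worth a sentence in a written version: the full-measure set must be chosen independently of $\epsilon$ (intersect over $\epsilon=1/n$), and $\psi(r)=r^{-(k-d)}(\log r)^{-(1+\epsilon)}$ should be adjusted at $r=1$ (e.g.\ use $\log(r+2)$); neither affects the argument.
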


\begin{proof}
For $h \in (\R_+)^{k-d}$ define
\[
J(h) \coloneqq \int_{\R^{k-d}/\Z^{k-d}} \Bigl( \|\inner {h} {\beta} \| \cdot \bigl| \log \| \inner {h} {\beta} \| \bigr|^{1 + \delta} \Bigr)^{-1} d \beta.
\]
We claim first of all that $J(h) \ll 1$. Indeed, given $h = (h_j)_j$, let $i$ be fixed, such that $h_i \geq h_j$ for any $j \neq i$.
Consider the change of basis defined by $u_i \coloneqq \inner{h}{\beta}$ and $u_j \coloneqq \beta_j$ for $i \neq j$. This change of basis has Jacobian determinant $h_i^{-1}$, and the domain $[0,1)^{k-d}$ transforms to a region contained in the box $B = [0,l_1) \times \cdots \times [0,l_{k-d})$, where $l_j = 1$ for $i \neq j$ and $l_i = (k-d) h_i$. It follows that
\[\begin{split}
J(h) & \leq \int_B \bigl( h_i \cdot \|u_i\| \cdot |\log \|u_i\||^{1 + \delta} \bigr)^{-1} d u \\
     & =    (k-d) \biggl(\int_0^{{1}/{2}} (-u_i \cdot (\log u_i)^{1+\delta})^{-1} du_i \\
     & \qquad      + \int_{{1}/{2}}^1 (-(1-u_i) \cdot (\log (1-u_i))^{1+\delta})^{-1} du_i \biggr) \\
     & =    2 (k-d) \delta^{-1} (\log 2)^{-\delta}.
\end{split}\]

From $J(h) \ll 1$ we may deduce that
\[
\sum_{h_1 = 1}^\infty \cdots \sum_{h_{k-d} = 1}^\infty (h_1 (\log h_1)^{1+\delta} \cdots h_{k-d} (\log h_{k-d})^{1+\delta})^{-1} \prod_{j=1}^d J(h) < \infty.
\]
Exchanging the order of summation and integration, the quantity
\begin{multline*} \label{eq:sum}
 X \coloneqq \sum_{h_1 = 1}^\infty \cdots \sum_{h_{k-d} = 1}^\infty (h_1 (\log h_1)^{1+\delta} \cdots h_{k-d} (\log h_{k-d})^{1+\delta})^{-1} \cdot \\
\prod_{j=1}^d (\|\inner {h} {\alpha} \| \cdot |\log \| \inner {h} {\alpha}\|)^{1+\delta}|^{-1}
\end{multline*}
is bounded for almost every choice of $\alpha =(\alpha_j) \in \mathbb{R}^{k-d}$. For such an $\alpha$, we then have that
\[\begin{split}
A'(H) & \coloneqq \sum_{h_1 = 1}^H \cdots \sum_{h_{k-d} = 1}^H (h_1 \cdots h_{k-d})^{-1} \prod_{j=1}^d \|\inner{h}{\alpha}\|^{-1}  \\
      & \leq X \cdot \max_{0 < |h| \leq H} \Big((\log h_1 \cdot \cdots \log h_{k-d})^{1+\delta} \prod_{j=1}^d | \log \| \inner{h}{\alpha}\||^{1+\delta} \Big) \\
      & \ll_{\delta, \alpha} (\log H)^{(k-d)(1+\delta)} \prod_{j=1}^d \max_{0 < |h| \leq H} \bigl| \log \| \inner{h}{\alpha}\| \bigr|^{1+\delta}.
\end{split}\]
By the Khintchine--Groshev theorem (Lemma~\ref{lem:Khintchine-Groshev}) applied to one linear form in $k-d$ variables, there is a full measure set of $\alpha \in \mathbb{R}^{k-d}$ for which, for all $\epsilon > 0$, there exists $C>0$ with
\[
\|\inner {h} {\alpha_j}\| \geq \frac{C}{|h|^{k-d+\epsilon}}
\]
for all non-zero $h \in \mathbb{Z}^{k-d}$. It follows that for almost all $\alpha$ we may bound
\[
A'(H)\ll (\log H)^{(k-d)(1+\delta) + d(1+\delta)} \ll (\log H)^{k+\epsilon}
\]
for any $\epsilon > 0$ by setting $\delta$ sufficiently small. The implicit constant depends on $\epsilon$ and $\alpha$.

The above calculation may be repeated for the sums analogous to $A'(H)$ but where certain indices run over negative values.
For non-empty $S \subseteq \{1,\ldots,d\}$, let $A_S$ be the sum given by Equation~\eqref{eq:sum-lemma-to-bound}, but where we only sum over vectors $h$ which are non-zero in those coordinates belonging to $S$.
By the above, the quantity $A_S$ is bounded by $2^{\# S} \cdot A'(H) \ll (\log H)^{k+\epsilon}$. Since the sum which we want to bound is equal to $\sum_{S \in 2^{\{1,\ldots,d\}}} A_S \ll (\log H)^{k+\epsilon}$, the lemma follows.
\end{proof}

We are now ready to prove Theorem \ref{main:discrepancy}:

\begin{proof}[Proof of Theorem \ref{main:discrepancy}]
Let $L : \R^d \to \R^{k-d}$ and a corresponding regular, cubical cut and project set $Y = Y_s$ be given. For any equivalence class of $r$-patch $\PP$ and $y \in Y$, by Lemma \ref{lem:connected components} we have that $\PP(y,r) = \PP$ if and only if $y^* \in Q$, where $Q$ is the connected component of $\reg(r)$ corresponding to $\PP$, which is an axes parallel box. By Lemma \ref{lem:asymptotic} we have that $\xi_\PP = |Q|$. For $N \in \N$, let
\[
\chi_\PP(y,N) \coloneqq \{y' \in Y \mid \PP(y',r) = \PP \text{ and } \tilde{y}' - \tilde{y} \in C(N)\},
\]
so that we wish to bound the quantity \[D_N(\PP) \coloneqq |\#\chi_\PP(y,N) - (2N+1)^d \cdot \xi_\PP|.\]

Since the cubical window $\W$ with some boundary points removed is a fundamental domain for $\{0\}^d \times \Z^{k-d}$ in $F_\rho$, we may identify $\chi_\PP(y,N)$ with the set
\begin{equation}\label{eq:return}
\{n \in [-N,N]^d_{\Z} \mid \LL(n)+y^* \in Q\}.
\end{equation}
By Lemma \ref{lem:A-T ineq}, there is a uniform constant $C>0$, independent of $\PP$, for which
\[
\frac{D_N(\PP)}{(2N)^d} \leq C \Big(\frac{1}{H} + \sum_{0<\abs{h}\le H} \frac{r(h)}{(2N+1)^d} |S| \Big)
\]
for any $H \in \N$, where
\begin{align*}
 S         & = \sum_{\substack{n \in \Z^d \\ |n| \leq N}} \exp(2\pi i\inner{h} {\LL(n)}),
\end{align*}
and
\begin{align*}
 r(h)^{-1} & = \prod_{j=1}^{k-d} \max \{1,|h_j|\}.
\end{align*}
An upper bound for the exponential sum may be given as
\[\begin{split}
|S| & = \left| \sum_{n_1 = -N}^N \cdots \sum_{n_d = -N}^N \exp(2\pi i\inner{h} {\LL(n_1, \ldots, n_d)}) \right| \\
    & \leq \prod_{i=1}^d \frac{2}{|1- \exp(2\pi i\inner{h} {\LL(e_i)})|} \\
    & = \prod_{i=1}^d \frac{2}{2 \left| \sin( \pi \inner{h} {\LL(e_i)}) \right|}
     \leq \prod_{i=1}^d \bigl(2 \| \inner {h} {\LL(e_i)} \| \bigr)^{-1},
\end{split}\]
using the concavity inequality $\left| \sin(\pi x) \right| \geq 2 \left| x \right|$ on $[-1/2, 1/2]$.
This reveals how the discrepancy may be controlled by restricting the Diophantine properties of $L$. By Lemma \ref{lem:technical}
\[
\sum_{0<\abs{h}\le H} r(h) \prod_{i=1}^d \| \inner {h} {\LL(e_i)} \|^{-1} \ll_{\delta, L} (\log H)^{k+\delta}
\]
for any $\delta > 0$ for almost every $L$. Hence
\[
D_N(\PP) \ll_{\delta,L} \frac{N^d}{H} + (\log H)^{k+\delta}.
\]
Letting $H = N^d$, we have that
\[
D_N(\PP) \ll_{\epsilon,L} (\log N)^{k + \epsilon}
\]
for any $\epsilon > 0$. It easily follows that there exists some $C>0$ for which $D_R(\PP) < C(\log R)^{k+\epsilon}$ for any $R \geq 1$.
\end{proof}

\begin{remark}\label{rem:axes parallel}
Notice that in order to use Lemma \ref{lem:A-T ineq} in the proof of Theorem \ref{main:discrepancy} it is only necessary to know that the appearance of an $r$-patch $\PP$ corresponds to a visit under $\mathcal L$ to some axes parallel box, as in \eqref{eq:return}. This fact will be needed in Section \ref{sec:Intrinsically defined patches and search regions}.
\end{remark}

The proof of low discrepancy established in the above argument may be used to bound the repetitivity function for typical cut and project sets. We deduce the following corollary to Theorem \ref{main:discrepancy}, which gives a slight weakening on the first bound of the repetitivity function given in Theorem \ref{main:transference}:

\begin{corollary} Fix $\epsilon>0$. For Lebesgue almost all linear maps $L:\R^d\to \R^{k-d}$, the corresponding cubical cut and project sets are $\phi$-repetitive for
\[
\phi(r) \ge Cr^{k-d}(\log r)^{\frac{2k}{d} - 1 + \epsilon}.
\]
The constant $C$ depends on $L, \epsilon$ and $\pi$.
\end{corollary}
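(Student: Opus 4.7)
The plan is to combine the discrepancy estimate of Theorem \ref{main:discrepancy} with a uniform lower bound on patch frequencies $\xi_\PP$, so that whenever the discrepancy drops below the frequency, every equivalence class of $r$-patch is forced to appear in the relevant search region. Concretely, if no $y' \in Y$ with $\tilde y' - \tilde y \in C(R)$ satisfies $\PP(y',r) = \PP$, then $\xi_\PP(y,R) = 0$ and Theorem \ref{main:discrepancy} yields $\xi_\PP \le C (\log R)^{k+\epsilon}/R^d$. Thus once $\xi_\PP$ strictly exceeds this bound, $\PP$ must appear in the $C(R)$-search region around $y$; the corresponding $y'$ then lies in a Euclidean ball $B(y, C'R)$ because $|y'-y| = |\pi(\tilde y' - \tilde y)|$ is controlled by the first $d$ coordinates of $\tilde y' - \tilde y$ together with the bounded quantity $\pi^*(\tilde y' - \tilde y) \in \W_\pi - \W_\pi$. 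This will produce $\phi$-repetitivity with $\phi(r) \asymp R$.

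The main work is the uniform lower bound on $\xi_\PP$. By Lemma \ref{lem:asymptotic}, $\xi_\PP = |Q|$ for the associated acceptance domain $Q \subseteq \W$. Since $\W$ is a unit cube, $\sing(r)$ decomposes into $k-d$ families of axes-parallel hyperplanes, the $i$-th of which cuts $\W$ at positions $\mathcal L_i(m)$ (mod $1$) for $m \in [-r,r]^d \cap \Z^d$. Each $Q$ is therefore an axes-parallel box whose side length in direction $i$ is bounded below by the minimum gap $\min_{0<|n|\le 2r}\|L_i(n)\|$. I would apply Lemma \ref{lem:Khintchine-Groshev} to each linear form $L_i \colon \R^d \to \R$ with $\psi(q) = q^{-d}(\log q)^{-(1+\epsilon)}$; since $\sum q^{d-1}\psi(q)$ converges, a full measure set of $L_i$'s satisfies $\|L_i(n)\| \gg |n|^{-d}(\log|n|)^{-(1+\epsilon)}$ for every nonzero $n \in \Z^d$. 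Intersecting over $i = 1,\ldots,k-d$ and multiplying the resulting side-length bounds gives
\[
\xi_\PP \gg r^{-d(k-d)}(\log r)^{-(1+\epsilon)(k-d)}
\]
uniformly in $\PP$, on a full measure set of parameters $L$.

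Setting $R = c\, r^{k-d}(\log r)^\alpha$ and using that $\log R \asymp (k-d)\log r$, the forcing inequality $\xi_\PP > C(\log R)^{k+\epsilon}/R^d$ reduces, after taking logarithms, to
\[
d\alpha > (1+\epsilon)(k-d) + (k+\epsilon) = 2k - d + O(\epsilon),
\]
equivalently $\alpha > 2k/d - 1 + O(\epsilon)$. Choosing the $\epsilon$'s appearing in Theorem \ref{main:discrepancy} and in the Khintchine--Groshev application sufficiently small relative to the $\epsilon$ of the corollary yields precisely the claimed exponent. The only mildly delicate part is the joint bookkeeping of these two $\epsilon$-parameters; no serious obstacle arises since the product structure of acceptance domains, and the equivalence between $C(R)$-neighbourhoods and Euclidean balls in $E$, are both immediate from the setup of Section \ref{sec:toolbox}.
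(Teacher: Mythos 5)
Your proposal is correct and follows essentially the same route as the paper: a Khintchine--Groshev lower bound on the side lengths (hence volumes) of the acceptance domains, combined with the discrepancy estimate of Theorem~\ref{main:discrepancy}, and the choice $R \asymp r^{k-d}(\log r)^{\alpha}$ with the same exponent arithmetic giving $\alpha > 2k/d - 1 + O(\epsilon)$. Your explicit remark that the $C(R)$-search region sits inside a Euclidean ball of comparable radius (because $\pi^*(\tilde y'-\tilde y)$ is confined to $\W_\pi - \W_\pi$) is a detail the paper leaves implicit, but it is not a different method.
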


\begin{proof}[Proof of Theorem \ref{main:transference}]
To obtain a lower bound for typical repetitivity, we wish to firstly bound the sizes of the regions $Q$ of Lemma \ref{lem:connected components} from below. This will dictate the long-term behaviour of appearances of patches across the resulting cut and project sets. To this end, let $\psi(n) \coloneqq (n^d(\log n)^{1+\epsilon_1})^{-1}$ and consider $L=(L_1, \dots, L_{k-d})$, written as $(k-d)$ linear forms in $d$ variables, with each $L_j\in \B(\psi)$.
The set of such $L$ is full measure by the Khintchine--Groshev Theorem \ref{lem:Khintchine-Groshev}.
For $n_1,n_2 \in [-N,N]^d_\Z$, since each $L_i \in \B(\psi)$, we have that
$|\LL_j(n_1) - \LL_j(n_2)|\ge \norm{\LL_j(n_1 - n_2)}\ge C\psi(r)$, where $C$ only depends on $L$.
It follows that the volumes of the connected components of $\reg(r)$ are bounded from below by a function that grows at least as fast as $(r^{d}(\log r)^{1+\epsilon_1})^{-(k-d)}$.

By the discrepancy estimate of Theorem~\ref{main:discrepancy}, there exist constants $c_1, c_2$, which do not depend on $\PP$, $y$ or $R$, for which $\xi_\PP(y,R)$ satisfies the estimate
\begin{equation}\label{eq:freq-minus-discrep}
\xi_\PP - c_2 \frac{(\log R)^{k+\epsilon_2}}{R^d} \geq c_1 (r^{d}(\log r)^{1+\epsilon_1})^{-(k-d)} - c_2 \frac{(\log R)^{k+\epsilon_2}}{R^d},
\end{equation}
where we use the fact that the frequency $\xi_\PP$ is given by the volume of a connected component of $\reg(r)$.
Recall that $\xi_\PP(y,R)$ counts the number of occurrences of $\PP$ in a region of size $R$ about $y$. If we pick $R = r^{k-d}(\log r)^{(2k/d) - 1 + \epsilon}$, where $\epsilon$ is some positive number that can be made arbitrarily small by setting $\epsilon_1$, $\epsilon_2$ sufficiently small, we deduce that the quantity of \eqref{eq:freq-minus-discrep} is eventually strictly positive as $r$ grows.
\end{proof}

\section{Repulsivity and repetitivity}\label{sec:repetitivity}

\begin{proof}[Proof of Theorem \ref{main:transference}] As in the previous proof, we will first want to investigate the sizes of the connected components of $\reg(r)$. Let $L=(L_1,\ldots,L_{k-d})$ be a system of $k-d$ linear forms $L_i$ of $d$ variables, where each $L_i \in \B(\psi)$ with $\psi(r) \coloneqq (r^d (\log r)^{1+\epsilon})^{-1}$. By Lemma \ref{lem:Khintchine-Groshev}, a set of full measure of $L$'s satisfies this condition. By the Diophantine conditions on each $L_i$ and Lemma \ref{lem:connected components}, the connected components $Q$ corresponding to $r$-patches of $Y$ are boxes whose side lengths are bounded below by a constant times $(r^d (\log r)^{1+\epsilon})^{-1}$.

Limiting the long-term frequency of appearances of a patch does not preclude it appearing multiple times in a smaller region than expected, and then not appearing at all in larger regions. To curtail this sort of behaviour, in addition to the conditions above, we also wish to enforce well-distribution of $\LL$ in $[0,1)^{k-d}$. So we suppose that $L \in \B(\overline\psi)$ where $\overline\psi(\phi) \coloneqq (\phi^{\frac{d}{k-d}} (\log \phi)^{\frac{1+\epsilon'}{k-d}})^{-1}$ for any $\epsilon' > 0$. By Lemma \ref{lem:Khintchine-Groshev}, this property (in conjunction with the property above on each $L_i$) applies to a full measure set of linear forms. By transference, $\LL$ applied to a box of integers of side length $\phi$ has density a constant times
\[
\phi^{-\frac{d}{k-d}} (\log \phi)^{\frac{d-1}{k-d} + 1 + \delta}
\]
in $[0,1)^{k-d}$, where $\delta > 0$ can by made arbitrarily small by setting $\epsilon'$ sufficiently small.

By Lemma \ref{lem:connected components}, there is a constant $C$ depending only on $E$ for which, whenever each $\phi$-orbit $x + \LL([-\phi,\phi]_{\Z}^d)$ of $x \in [0,1)^{k-d}$ intersects each connected component $Q$ of $\reg(r)$, then we have that every $r$-patch of $Y$ occurs within distance $\phi$ of every point $y \in Y$. By the calculations above, given $r>0$, it is sufficient to set $\phi = \phi(r)$ so that
\[
\phi^{-\frac{d}{k-d}} (\log \phi)^{\frac{d-1}{k-d} + 1 + \delta} \leq C (r^d (\log r)^{1+\epsilon})^{-1},
\]
for some constant $C>0$ (depending only on $E$ and $\epsilon$). A quick calculation shows that we may choose
\[
\phi(r) = c r^{k-d} (\log r)^{\frac{2k - 1}{d} -1 + \delta'}
\]
for constants $c,\delta' > 0$ which only depend on $E$ and $\epsilon$, and for which $\delta'$ can be made arbitrarily small by setting $\epsilon$ sufficiently small.

For the other bound on the typical behaviour of the repetitivity function, let $\psi(r) = c_1 r^{-d} (\log r)^{-1}$ and $L$ be such that $L_1 \in \E(\psi)$ and each of the $L_i$ have trivial kernels. The set of such linear forms is full measure by Lemma \ref{lem:Khintchine-Groshev}. It follows that for infinitely many values of $r$, there exists some acceptance domain $Q$ for an $r$-patch $\PP$ which is a box with first side length less than $c_1 r^{-d} (\log r)^{-1}$ and other sides, by a simple counting argument, of length less than $r^{-d}$. It will follow that any cubical cut and project set associated to $L$ is not $\phi$-repetitive so long as $\phi$ is chosen so that the orbit of $\LL$ applied to a box of integers of size $\phi(r)$ has gaps larger than these boxes.

Given $h,v > 0$, let $\phi = (h \cdot v^{k-d-1})^{1/d}$. Then for any positive $\alpha < 1$, for sufficiently large $\phi$ we may subdivide $[0,1)^{k-d}$ into more than $\alpha \phi^d$ boxes whose first side lengths are bounded below by $h$, and others are bounded below by $v$. So there exists $c_2$ for which the orbit under $\LL$ over a box of integers of size $c_2 \phi$ must fail to visit some box whose first side is $v$ and others are $h$. Set $v = c_1 r^{-d} (\log r)^{-1}$ and $h = r^{-d}$, so that $\phi = \phi(r) = c_1^{-1/d} r^{k-d} (\log r)^{1/d}$. Choosing appropriate starting points in the cut and project set (the positions of which correspond to a dense subset of $\W$, by irrationality), we may arrange for the orbit of $\LL$ under a box of integers of size $c_2 \phi(r)$ to miss the acceptance domain of $r$-patches $\PP$, for infinitely many values of $r$. Since the constant $c_1$ was arbitrary and $c_2$ is fixed, it follows that almost every cut and project set is not $\phi$-repetitive with $\phi(r) = C r^{k-d} (\log r)^{-1/d}$, for any $C>0$. \end{proof}

\begin{remark} The power of the logarithm for the lower bound in Theorem \ref{main:transference} is likely to be far from optimal for $k-d > 1$, since the argument only exploits Diophantine properties of the linear forms in a single direction, implementing trivial bounds in the others. \end{remark}

We now turn our attention to repulsivity.
\begin{proof}[Proof of Theorem \ref{thm:repulsive}]
Almost every linear form $L \colon \R^d \rightarrow \R^{k-d}$ is $\psi$-well approximable with
\[
\psi(r) = cr^{-\frac{d}{k-d}}(\log r)^{-\frac{1}{k-d}},
\]
for any $c>0$, and almost every linear form $L$ also has the property that each $L_i$ has trivial kernel. Given such an $L$, let non-zero $n \in \Z^d$ satisfy $\|L(n)\| \leq \psi(|n|)$ and set $r \coloneqq 2^{-\frac{1}{d}}\psi(|n|)^{-\frac{1}{d}}$. By a simple counting argument, for any positive $\alpha < 1$, for sufficiently large $r$ there exists a connected component $Q$ of $\reg(r)$ which is a box with side lengths bounded below by $r^{-d} = \alpha \psi(|n|)$.

It follows that for $m \in \Z^k + s$ projecting sufficiently close to the centre of $Q$ (which exists by total irrationality), we have $\PP(\pi(m),r) = \PP(\pi(m+n+f),r)$ for some $f \in F_\rho \cap \Z^k$. This gives us the bound \[d(\pi(m),\pi(m+n+f)) \ll |n| \ll r^{k-d} \log(r)^{-1 / d}.\] It follows that a cubical cut and project set corresponding to $L$ is not $\phi$-repulsive with $\phi(r) \coloneqq cr^{k-d} \log(r)^{-1/d}$, for any $c > 0$.

For the other bound on the repulsivity function, we want to use typical Diophantine properties for $L$ to bound, firstly, the sizes of the connected components of $\reg(r)$ from above and, secondly, $\|L(n)\|$ from below. For the latter we may impose that $L$ is $\psi$-badly approximable with $\psi(r) = cr^{-\frac{d}{k-d}} \log(r)^{-\frac{1 + \epsilon}{k-d}}$ for all $c,\epsilon > 0$, a condition which is satisfied by almost all linear maps $L$. For the former, we may impose that each $L_i$ is $\psi$-badly approximable for $\psi(r) = cr^{-d} \log(r)^{-(1 + \epsilon)}$ for all $c,\epsilon > 0$, which again applies to almost all linear maps $L_i : \R^d \rightarrow \R$. By the transference principle of Lemma \ref{lem:transference}, for all $\epsilon > 0$ the connected components of $\reg(r)$ have side lengths bounded above by $cr^{-d} \log(r)^{d(1 + \epsilon)}$ for some $c = c_\epsilon > 0$.

We wish to show that, for sufficiently large $r$, whenever $d(y,y') < cr^{k-d}\log(r)^{-1/d - (k-d) -\epsilon}$ for distinct $y,y' \in Y$, then $\PP(y,r) \neq \PP(y',r)$. So let $m \neq n \in \Z^d+s$ with $|m-n|\le C'r^{k-d}\log(r)^{-1/d - (k-d) -\epsilon}$ for some constant $C'$ and $\LL(m+s)$ and $\LL(n+s)$ both belonging to the same connected component $Q$ of $\reg(r)$. By the above bounds on the side lengths of $Q$, we see that $\|L(m-n)\| \leq cr^{-d} \log(r)^{d(1+\epsilon)}$. We may now use our badly approximable hypothesis on $L$ to conclude that $|m-n|$ must be larger than some constant times $r^{k-d} \log(r)^{-1/d - (k-d) -\epsilon}$.
\end{proof}

\section{Intrinsically defined patches and search regions}\label{sec:Intrinsically defined patches and search regions}

\subsection{Other patch types}
In the above proofs it was advantageous to use a specific choice of notion of a patch. However, versions of Theorems \ref{main:discrepancy}, \ref{main:transference} and \ref{thm:repulsive} hold true for many other choices as well.

Notice that a choice of notion of $r$-patch is essentially a choice of equivalence relation $\simeq_r$ on the points of $Y$ for each $r \in \R_+$. Say that two such choices $\simeq^1_*$ and $\simeq^2_*$ are {\bf linearly equivalent} if there exist constants $A,c>0$ for which $\simeq^1_{Ar+c} \subseteq \simeq^2_r$ and $\simeq^2_{Ar+c} \subseteq \simeq^1_r$ (where an equivalence relation on $Y$, here, is considered as a certain subset of $Y \times Y$). For $\simeq^1_*$ and $\simeq^2_*$ linearly equivalent, it is easy to see that if $Y$ is $\phi$-repetitive with respect to $\simeq^1_*$, then it is $\phi'$-repetitive with respect to $\simeq^2$, for $\phi'(r) = \phi(Ar+c)$, and similarly in the other direction. A similar statement holds for $\phi$-repulsivity. So Theorems \ref{main:transference} and \ref{thm:repulsive} hold for any notion of $r$-patch linearly equivalent to the one introduced in Section \ref{sec:toolbox}.

In this section, we shall focus on the following two natural definitions for a patch of size $r$ at $y\in Y$:
\begin{align*}
P_\I (y, r\Omega) \coloneqq & \{y'\in Y\mid y'-y\in r\Omega\}; \\
P_\II (y, r\Omega) \coloneqq & \{y'\in Y\mid \tilde y'-\tilde y\in \rho^{-1}(r\Omega)\}.
\end{align*}
We call these types of patches {\bf type I} and {\bf type II patches}, respectively. Here $\Omega$ is some bounded convex subset of $E$ containing a neighbourhood of the origin. For $X \subseteq E$, let $N_\kappa(X)$ denote the {\bf $\kappa$-neighbourhood of $X$} of points of $E$ within $\kappa$ of $X \subseteq E$. Assuming that $\Omega$ is convex (amongst many other weaker conditions) we have the bound $|N_\kappa(\partial r \Omega)| \leq cr^{d-1}$, for sufficiently large $r$. This is required in the proof of Lemma \ref{lem: controlled complements} below.

The patches of Section \ref{sec:toolbox} are given by
\[
P(y, r)=P_\II(y, (r[-1, 1]^d + F_\rho) \cap E)).
\]
Hence patches of type II and patches from Section \ref{sec:toolbox} are linearly equivalent, and Theorems \ref{main:transference} and \ref{thm:repulsive} apply to them. It was noted in \cite{HaynKoivSaduWalt2015} as Equation 4.1 that there is a constant $c>0$ such that for any $y \in Y$ and $r>0$ large enough,
\[
P_\I(y,(r-c)\Omega)\subseteq P_\II(y,r\Omega)\subseteq P_\I(y, (r+c)\Omega).
\]
By this observation Theorems \ref{main:transference} and \ref{thm:repulsive} also apply directly to the intrinsically defined patches of type I.

In the following, we sometimes use the subindices to distinguish between patch types, and $\both$ when the statement holds for both type I and type II. The objects $\PP$, $\xi_\PP$ and $\xi_\PP(y,R)$ for type I and type II patches are defined as in Section \ref{sec:toolbox}, and for the most part the same notation is used, which should not be a cause of confusion. For example, two $r$-patches $P_\both(y_1,r\Omega)$ and $P_\both(y_2,r\Omega)$ of either type I or type II are equivalent if $P_\both(y_1,r\Omega) - y_1 = P_\both(y_2,r\Omega) - y_2$, and we denote the corresponding equivalence class by $\PP_\both(y_1,r\Omega)$.

The rest of this subsection is devoted to proving the following version of Theorem \ref{main:discrepancy} for patches of types I and II. It may be paraphrased as saying that the same discrepancy estimates hold for generalised patch types, but that for type I patches of size $r$ the constant term depends on $r$.
\begin{theorem}\label{thm:other_patches}
Let $\epsilon> 0$. Then for almost all choices of linear maps $L:\R^d\to \R^{k-d}$, for the corresponding cubical cut and project sets $Y$, there is a constant $C$ that only depends on $L, \epsilon$ and $\pi$ such that the following holds: Fix a bounded convex set $\Omega \subseteq E$ containing a neighbourhood of the origin. Let $r>0$, and let $y'\in Y$. Then for type II patches $\PP_\II = \PP_\II(y', r\Omega)$, for all $y\in Y$, and for all $R \geq 1$
 \[
  \abs{\xi_{\PP_\II} (y,R) - \xi_{\PP_\II}} \leq C \cdot \frac{\log(R)^{k+\epsilon}}{R^d}.
 \]
Furthermore, for $r$ large enough, for type I patches $\PP_\I = \PP_\I(y', r\Omega)$, for all $y \in Y$, and for all $R \geq 1$
 \[
  \abs{\xi_{\PP_\I} (y,R) - \xi_{\PP_\I}} \leq C \cdot \frac{\log(R)^{k+\epsilon} r^{(d-1)(k-d-1)}}{R^d}.
 \]
\end{theorem}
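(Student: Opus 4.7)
\emph{Plan.} The proof splits into two parts: first the type II bound, then the type I bound deduced from it.

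For type II patches with general convex $\Omega$, the argument for Lemma \ref{lem:connected components} carries over verbatim: the singular set is still a union of axes-parallel hyperplanes, since it consists of translates of $\partial \W$, which is axes-parallel because $\W$ is cubical. Hence the acceptance domain of each type II equivalence class is a single axes-parallel box in $\W$, and by Remark \ref{rem:axes parallel} the proof of Theorem \ref{main:discrepancy} goes through with $r\Omega$ in place of $[-r,r]^d$, delivering the first bound of the theorem with a constant independent of $r$.

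For type I patches I would use the sandwich
\[
P_\II(y,(r-c)\Omega) \subseteq P_\I(y, r\Omega) \subseteq P_\II(y,(r+c)\Omega),
\]
obtained from the inclusions recalled just before this theorem by substituting $r \pm c$ for $r$. The point is that the type I $r$-patch is a deterministic function of the type II $(r+c)$-patch, namely
\[
P_\I(y, r\Omega) = \{y' \in P_\II(y,(r+c)\Omega) \mid y'-y \in r\Omega\},
\]
so each type I equivalence class $\PP_\I$ is a disjoint union of type II $(r+c)$-equivalence classes, and its acceptance domain in $\W$ is correspondingly a disjoint union of the axes-parallel boxes produced in the first part. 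Applying the type II discrepancy bound to each such box and summing gives
\[
|\xi_{\PP_\I}(y,R)-\xi_{\PP_\I}| \leq M(r) \cdot C \cdot \frac{(\log R)^{k+\epsilon}}{R^d},
\]
where $M(r)$ is the number of constituent type II classes. The theorem for type I then follows from a bound $M(r) \leq C r^{(d-1)(k-d-1)}$.

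The main obstacle is precisely this combinatorial estimate on $M(r)$. Two type II $(r+c)$-patches sitting in the same type I $r$-class can differ only on lattice points $n$ whose $\pi$-image lies in the annulus $(r+c)\Omega\setminus r\Omega$; by the convexity assumption on $\Omega$ used in the excerpt, $|N_\kappa(\partial r\Omega)| \leq cr^{d-1}$, so only $O(r^{d-1})$ such boundary lattice points are available. The problem thus reduces to counting realisable configurations on these boundary $n$'s, equivalently cells in the arrangement of the axes-parallel boxes $B_n = \W\cap(\W-\rho^*(n))$ in $\W$ indexed by them. A naive arrangement count in $\R^{k-d}$ only yields $r^{(d-1)(k-d)}$, so one must extract an additional factor of $r^{-1}$ using the geometry of $\partial r\Omega$: heuristically, for $n$ near the boundary the reduced positions $\rho^*(n) \bmod \Z^{k-d}$ concentrate transverse to the outward normal of $\partial r\Omega$, reducing the effective dimension of the arrangement from $k-d$ to $k-d-1$, in the spirit of the complexity techniques of \cite{Jul12}. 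Establishing this refinement rigorously is the principal challenge; once it is in hand, summation of the type II bounds finishes the proof.
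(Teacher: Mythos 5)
Your type II argument reaches the right conclusion by essentially the paper's route (the acceptance domain is a single axes-parallel box, so Remark \ref{rem:axes parallel} applies with a constant independent of $r$), though "the singular set is a union of axes-parallel hyperplanes, hence the components are boxes" is looser than what is actually needed: pieces of translated boundaries $\partial\W + \rho^*(n)$ cut $\W$ into components that are not boxes in general (L-shapes are possible), and the paper's Lemma \ref{lem: controlled complements} does real work here, showing that every "$n$ is \emph{not} in the patch" constraint $\W^{\text{c}} - n^*$ is implied by an "in" constraint $\W - (n-m)^*$ with $m \in K$, because $\W^{\text{c}}$ is locally covered by lattice translates of $\W$ and $\rho^{-1}(r\Omega)$ is $F_\rho$-invariant. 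So the acceptance domain is an intersection of translated windows, hence a box.

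For type I there is a genuine gap. Your reduction (each type I $r$-class is a union of type II $(r+c)$-classes, so $A(\PP_\I)$ is a disjoint union of $M(r)$ axes-parallel boxes and the discrepancy bounds add) is valid, but the entire content of the theorem is then the bound $M(r) \ll r^{(d-1)(k-d-1)}$, which you explicitly leave unproven. Worse, the heuristic you propose for it cannot work: for totally irrational $E$ the points $\rho^*(n) \bmod \Z^{k-d}$ attached to lattice points $n$ near $\partial(r\Omega) + F_\rho$ \emph{equidistribute} in the torus; they do not concentrate transverse to the outward normal, so no "effective dimension reduction" of the arrangement is available from the geometry of $\partial(r\Omega)$. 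The exponent $(d-1)(k-d-1)$ in the paper has a different, elementary source and does not require counting type II subclasses at all. Lemma \ref{lem: controlled complements} shows that for type I patches only the $n \in Z_\I(r\Omega)$ with $\pi(n)$ within bounded distance of $\partial(r\Omega)$ contribute irredundant complement constraints, so $A(\PP_\I)$ is a \emph{single} box with at most $N \ll r^{d-1}$ translated unit cubes removed (here convexity of $\Omega$ gives $|N_\kappa(\partial r\Omega)| \ll r^{d-1}$). A separate combinatorial lemma then decomposes any box in $\R^{m}$ with $N$ unit cubes removed into at most $(N+1)^{m-1}$ axes-parallel boxes, by slicing along the $\le N+2$ hyperplanes carrying the relevant cube faces and inducting on dimension; with $m = k-d$ this yields $\ll r^{(d-1)(k-d-1)}$ boxes, to each of which the type II argument applies. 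Supplying either that slicing lemma, or an honest proof of your bound on $M(r)$, is the missing step.
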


The proof of this claim will follow from the proof of Theorem~\ref{main:discrepancy}, along with some control on what we call {\bf acceptance domains} associated to type I and II patches, see Lemma \ref{lem: controlled complements}. Set $\widetilde{\calS}=(\mathcal{W}-\mathcal{W})+E$ and, given a fixed patch shape $\Omega$, let $Z_\I(r\Omega) \coloneqq \Z^k \cap \widetilde{\calS} \cap \pi^{-1}(r\Omega)$ and $Z_\II(r\Omega) \coloneqq \Z^k \cap \widetilde{\calS} \cap \rho^{-1}(r\Omega)$. Recall the definition of the star map from Section \ref{sec:toolbox}.

\begin{lemma}\label{lem: acceptance domains}
Let $P_\both(y,r\Omega)$ be a patch of type I or II. We have that $y' \in P_\both(y,r\Omega)$ if and only if $y' = y + \pi(n)$ for $n \in Z_\both(r\Omega)$ satisfying $y^* \in \W - n^*$.
\end{lemma}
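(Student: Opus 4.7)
The plan is a definition chase; the lemma essentially unpacks what it means for a lattice vector to contribute to a patch of $Y$. I would fix $y \in Y$ with lift $\tilde y \in \Z^k + s$, consider an arbitrary $y' \in Y$, and set $n \coloneqq \tilde y' - \tilde y \in \Z^k$, so that $y' = \pi(\tilde y') = y + \pi(n)$. All subsequent conditions will be translated to conditions on $n$.

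The shape condition translates directly: for a type I patch, $y' - y = \pi(n) \in r\Omega$ is the same as $n \in \pi^{-1}(r\Omega)$; for a type II patch, $\tilde y' - \tilde y = n \in \rho^{-1}(r\Omega)$ is immediate from the definition. The membership $y' \in Y$ is equivalent to $\tilde y' \in \calS$; since $\calS = \W_\pi + E$ is a union of $E$-cosets and $\rho^*$ collapses each such coset to a single point of $F_\rho$, one has $\rho^*(\calS) = \rho^*(\W_\pi) = \W$, and hence $\calS = (\rho^*)^{-1}(\W)$. Applying $\rho^*$ to $\tilde y' = \tilde y + n$ then gives $\rho^*(\tilde y') = y^* + n^*$, so $y' \in Y$ is equivalent to $y^* + n^* \in \W$, i.e.\ $y^* \in \W - n^*$.

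It remains to show that the membership $n \in \widetilde{\calS}$ built into $Z_\both(r\Omega)$ is automatic once $y,y' \in Y$. This is immediate: $y^* \in \W$ and $y^* + n^* \in \W$ together force $n^* = (y^* + n^*) - y^* \in \W - \W$, which by the definition $\widetilde{\calS} = (\W - \W) + E$ and the fact that $\rho^*$ vanishes on $E$ is equivalent to $n \in \widetilde{\calS}$. All implications above run as equivalences, so the converse direction is free: given $n \in Z_\both(r\Omega)$ with $y^* \in \W - n^*$, the point $y + \pi(n)$ automatically lies in $Y \cap (y + r\Omega)$ in the type I case, or has a lift in $\tilde y + \rho^{-1}(r\Omega)$ in the type II case. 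There is no substantive obstacle; the only identification requiring a moment's thought is $\calS = (\rho^*)^{-1}(\W)$, which follows purely from the definition of $\W$ and from $\rho^*$ vanishing on $E$.
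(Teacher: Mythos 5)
Your proposal is correct and follows essentially the same definition-chasing route as the paper: set $n = \tilde y' - \tilde y$, translate the shape condition for each patch type, note that $y,y' \in \calS$ forces $n \in \widetilde{\calS}$, and reduce membership of $y+\pi(n)$ in $Y$ to $y^* + n^* \in \W$, i.e.\ $y^* \in \W - n^*$. The only cosmetic difference is that you make the identification $\calS = (\rho^*)^{-1}(\W)$ explicit, which the paper leaves implicit.
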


\begin{proof} Suppose that $y' \in P_\both(y,r\Omega)$. Then $y,y' \in Y$ or, equivalently, $\tilde{y}, \tilde{y}' \in \calS \cap (\Z^k+s)$. It follows that $n \coloneqq \tilde{y}'-\tilde{y} \in \widetilde{\calS} \cap \Z^k$. If $y' \in P_\text{I}(y,r\Omega)$ then $y'-y = \pi(n) \in r\Omega$, so $n \in Z_\I(r\Omega)$. Similarly, if $y' \in P_\II(y,r\Omega)$ then $\tilde{y}'-\tilde{y} = n \in \rho^{-1}(r\Omega)$, so $n \in Z_\II(r\Omega)$. It follows that for $y' \in P_\both(y,r\Omega)$ it is necessary that $y'-y=\pi(n)$ with $n \in Z_\both(r\Omega)$. Assuming that there is such an $n$, we have that $y' \in P_\both(y,r\Omega)$ if and only if $y'=y+\pi(n) \in Y$; equivalently, $\tilde{y}+n \in \calS$ which is the case if and only if $y^* \in \W-n^*$.
\end{proof}

The above allows us to construct acceptance domains for patches. Given an equivalence class of patch $\PP = \PP_\both(y,r\Omega)$ of type I or II, define
\begin{align*}
Z_\both^\in(\PP) \coloneqq & \{n \in Z_\both(r\Omega) \mid n = \tilde{y}' - \tilde{y} \text{ for some } y' \in P_\both(y,r\Omega)\},
\end{align*}
and
\begin{align*}
Z_\both^{\notin}(\PP) \coloneqq & \{n \in Z_\both(r\Omega) \mid n \neq \tilde{y}' - \tilde{y} \text{ for any } y' \in P_\both(y,r\Omega)\}.
\end{align*}
Of course, these sets do not depend on the choice of representative $P_\both(y,r\Omega)$ of $\PP$. Notice that $Z_\both^\in(\PP)$ and $Z_\both^{\notin}(\PP)$ are complementary subsets of $Z_\both(r\Omega)$. To explain the logic of the notation, notice that the elements of $Z_\both^\in(\PP)$ determine which lifted points are \emph{in} $\PP$, relative to the central point of the patch, and $Z_\both^{\notin}(\PP)$ determines which, of the points which \emph{could} be in $\PP$, are in fact \emph{not} in $\PP$.

For a subset $X$ of (an understood) space $\Xi$, we let $X^\text{c}$ be the closure of the complement of $X$ in $\Xi$, that is, $X^\text{c} \coloneqq \overline{\Xi \setminus X}$.

\begin{corollary} Let $\PP = \PP_\both(y_1,r\Omega)$ be a patch of type~I or~II. Then there exists a subset $A(\PP)$ of the window for which, for any $y_2 \in Y$, we have that $\PP_\both(y_2,r\Omega) = \PP$ if and only if $y_2^* \in A(\PP)$. Moreover, we may set
\begin{equation} \label{eq:acceptance-domain}
A(\PP) = \bigcap_{n \in Z_\both^\in(\PP)} \W - n^* \cap \bigcap_{n \in Z_\both^{\notin}(\PP)} \W^\text{c} - n^*.
\end{equation}
\end{corollary}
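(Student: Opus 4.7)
The plan is to derive the corollary directly from Lemma~\ref{lem: acceptance domains} by translating its pointwise characterization of which displacements $n$ appear in the patch into an intersection of subsets of $\W$, using the regularity of $Y$ to justify the use of $\W^c$ in place of the open complement. Fix a representative $P_\both(y_1, r\Omega)$ of $\PP$, and let $y_2 \in Y$ be arbitrary. By Lemma~\ref{lem: acceptance domains}, the patch $P_\both(y_2, r\Omega)$ is identified with the set of those $n \in Z_\both(r\Omega)$ for which $y_2^* \in \W - n^*$. Thus $\PP_\both(y_2, r\Omega) = \PP$ holds precisely when this admissible set of displacements coincides with $Z_\both^\in(\PP)$.

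Since $Z_\both^\in(\PP)$ and $Z_\both^{\notin}(\PP)$ partition $Z_\both(r\Omega)$, coincidence of the two admissible sets is equivalent to the conjunction of $y_2^* \in \W - n^*$ for every $n \in Z_\both^\in(\PP)$ together with $y_2^* \notin \W - n^*$ for every $n \in Z_\both^{\notin}(\PP)$. The first family of conditions gives $y_2^* \in \bigcap_{n \in Z_\both^\in(\PP)} (\W - n^*)$, which is the first factor in \eqref{eq:acceptance-domain}. The second family literally gives $y_2^* \in \bigcap_{n \in Z_\both^{\notin}(\PP)} (F_\rho \setminus (\W - n^*))$, and it remains only to convert the set-theoretic complement into the closure $\W^c - n^*$ demanded by the statement.

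This is where I will invoke regularity of $Y$. Since $\partial \calS \cap (\Z^k + s) = \emptyset$, for any $n \in \Z^k$ the translated point $\tilde{y}_2 + n$ lies in $\Z^k + s$ and hence avoids $\partial \calS$, so its $\rho^*$-projection $y_2^* + n^*$ cannot lie on $\partial \W$; equivalently, $y_2^*$ avoids $\partial(\W - n^*)$. Therefore $y_2^* \notin \W - n^*$ is equivalent to $y_2^* \in \overline{F_\rho \setminus (\W - n^*)} = \W^c - n^*$, and the second family of conditions assumes exactly the form in \eqref{eq:acceptance-domain}. The inclusion $A(\PP) \subseteq \W$ is automatic, since $0 \in Z_\both^\in(\PP)$ (as $y_1 \in P_\both(y_1,r\Omega)$), so the factor $\W - 0^* = \W$ appears in the first intersection. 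I do not anticipate a real obstacle here: the corollary is essentially a logical unpacking of the preceding lemma, and the only subtlety --- the closure in $\W^c$ rather than the open complement --- is dispatched cleanly by the regularity hypothesis.
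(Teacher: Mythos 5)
Your proposal is correct and follows essentially the same route as the paper: both arguments are a direct logical unpacking of Lemma~\ref{lem: acceptance domains}, translating membership of displacements $n$ in $Z_\both^\in(\PP)$ or $Z_\both^{\notin}(\PP)$ into the intersection conditions $y_2^* \in \W - n^*$ or $y_2^* \in \W^\text{c} - n^*$. If anything you are slightly more careful than the paper, since you explicitly invoke regularity of $Y$ to rule out $y_2^* + n^*$ landing on $\partial\W$ (where $\W$ and $\W^\text{c}$ overlap), a point the paper's proof leaves implicit.
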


\begin{proof} Suppose that $y_2 \in Y$ with $y_2^* \in A(\PP)$. So for all $n \in Z_\both^\in(\PP)$ we have that $y_2^* \in \W - n^*$, and hence $y_2 + \pi(n) \in P_\both(y_2,r\Omega)$ by Lemma \ref{lem: acceptance domains}. Again by Lemma \ref{lem: acceptance domains}, for all $y' \in P_\both(y_1,r\Omega)$ we have that $y' = y_1 + \pi(n)$ for some $n \in Z_\both^\in(\PP)$. It follows that $P_\both(y_1,r\Omega) - y_1 \subseteq P_\both(y_2,r\Omega) - y_2$. The opposite inclusion is similar, using $Z_\both^{\notin}(\PP)$ in place of $Z_\both^\in(\PP)$. Conversely, suppose that $y_2^* \notin A(\PP)$. It follows that $y_2^* \notin \W - n^*$ for some $n \in Z_\both^\in(\PP)$ or $y_2^* \notin \W^\text{c} - n^*$ for some $n \in Z_\both^{\notin}(\PP)$. By Lemma \ref{lem: acceptance domains}, in the former case we have that $y_2 + \pi(n) \notin P_\both(y_2,r\Omega)$ but that $y_1 + \pi(n) \in P_\both(y_1,r\Omega)$, and in the latter case we have that $y_2 + \pi(n) \in P_\both(y_2,r\Omega)$ but that $y_1 + \pi(n) \notin P_\both(y_1,r\Omega)$. \end{proof}

We call the region $A(\PP)$ constructed above the {\bf acceptance domain} of $\PP$. This is an extension of the definition in Lemma~\ref{lem:connected components} to more general patches.
It is again a consequence of the Birkhoff Ergodic Theorem and the irrationality of $E$ that $\xi_\PP = |A(\PP)|$. In order to prove discrepancy estimates for these regions, analogously to our proof of Theorem \ref{main:discrepancy}, we need more control over the shapes of these regions. This amounts to reducing the number of translates of the complement of the window required on the right-hand side of the intersection of (\ref{eq:acceptance-domain}). The following lemma shows that we may completely eliminate these entries for type II patches, so that the corresponding acceptance domains are axes parallel boxes, and for type I patches we only require a number of translates which grows at the rate of the measure of a neighbourhood of the boundary of $r\Omega$.

\begin{lemma}\label{lem: controlled complements}
For a patch $\PP = \PP_\I(y,r\Omega)$ of type I, we have that
\[
A(\PP) = \bigcap_{n \in Z_\both^\in(\PP)} \W - n^* \cap \bigcap_{n \in Z'} \W^\text{c} - n^*
\]
where $\# Z' \leq cr^{d-1}$ for some constant $c$ depending only on $\Omega$, $L$ and $\pi$. For a patch $\PP = \PP_\II(y,r\Omega)$ of type II, we have that
\[
A(\PP) = \bigcap_{n \in Z_\both^\in(\PP)} \W - n^*
\]
and so $A(\PP)$ is an axes parallel box.
\end{lemma}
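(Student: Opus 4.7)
My plan starts from the general formula
\[
A(\PP) = \bigcap_{n \in Z_\both^\in(\PP)} (\W - n^*) \cap \bigcap_{n \in Z_\both^{\notin}(\PP)} (\W^\text{c} - n^*)
\]
of the preceding corollary, and seeks to replace each ``out'' constraint $\W^\text{c} - n^*$ by an ``in'' constraint from a \emph{partner} lattice point $n' = n + e$ with $e \in F_\rho \cap \Z^k$. Identifying $F_\rho$ with $\R^{k-d}$ and $\W$ with the unit cube $[0,1]^{k-d}$, we have $(n+e)^* = n^* + e$, so $\W - (n+e)^*$ is the shifted cube $(\W - n^*) - e$. If on the acceptance domain $n$ is ``out'' precisely through coordinates $i \in S$ on the upper side (so $y_i^* + n_i^* > 1$ throughout), then choosing $e = -\sum_{i \in S} e_i$, where $e_i$ is the $i$-th standard basis vector of $\R^{k-d}$, gives a partner $n'$ with $y^* + (n')^* = y^* + n^* - e \in [0,1]^{k-d}$, i.e., $n' \in Z_\both^\in(\PP)$; the corresponding ``in'' constraint reproduces exactly the hyperplanes $y_i^* = 1 - n_i^*$ for $i \in S$ that were responsible for the original ``out'' condition. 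The lower-side analogue is handled with the opposite sign of $e_i$.

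For Type II patches, the slab $\rho^{-1}(r\Omega) = r\Omega + F_\rho$ is invariant under translation by $F_\rho \cap \Z^k$, so whenever $n \in Z_\II(r\Omega)$ and $n^* + e \in \W - \W = [-1,1]^{k-d}$, the partner $n+e$ again lies in $Z_\II(r\Omega)$ (because $\rho(e) = 0$ gives $\rho(n+e) = \rho(n) \in r\Omega$). A short check on $n^*_i$ shows that the $e$'s arising above always satisfy $n^* + e \in [-1,1]^{k-d}$: when $n$ is upper-out in coordinate $i$ one has $n^*_i > 1 - y^*_i \geq 0$, so $n^*_i - 1 \in (-1, 0]$. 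Every partner needed therefore lies in $Z_\II^\in(\PP)$, and after substitution the formula collapses to $A(\PP) = \bigcap_{n \in Z_\II^\in(\PP)}(\W - n^*)$, a finite intersection of axes parallel translates of the cube $\W$, hence an axes parallel box.

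For Type I patches the invariance argument fails: $\pi^{-1}(r\Omega) = r\Omega + F_\pi$ is invariant under $F_\pi \cap \Z^k$, not $F_\rho \cap \Z^k$, and for nonzero $e \in F_\rho \cap \Z^k$ the image $\pi(e) \in E$ is typically nonzero. The partner $n+e$ then fails to land in $Z_\I(r\Omega)$ exactly when $\pi(n) + \pi(e) \notin r\Omega$. Since the relevant $e$'s form a finite set (constrained by $n^* + e \in \W - \W$), the quantity $|\pi(e)|$ is bounded by a constant $K$ depending only on $L$ and $\pi$, so the partner can only fail when $\pi(n)$ lies in the $K$-neighbourhood of $\partial(r\Omega)$. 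The ``bad'' set $Z'$ of ``out'' lattice points whose partner is unavailable therefore satisfies $\#Z' \ll |\Z^k \cap \pi^{-1}(N_K(\partial(r\Omega)))|$, which by the convexity hypothesis on $\Omega$ and the estimate $|N_K(\partial(r\Omega))| \le c r^{d-1}$ noted earlier in the text gives $\#Z' \le c r^{d-1}$ with $c$ depending only on $\Omega$, $L$ and $\pi$.

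The main obstacle is checking that the partner substitution is exhaustive: every bounding face of the acceptance domain must arise either as an ``in'' face from some $n \in Z_\both^\in(\PP)$ or, in the Type I case, from a retained ``out'' constraint in $Z'$. This requires classifying the binding ``out'' condition for each $n$ by recording which coordinates of $y^* + n^*$ leave $[0,1]$ and on which side, and verifying that the natural multi-coordinate partner $e = \sum_{i \in S \sqcup T} \pm e_i$ stays inside $Z_\both(r\Omega)$ up to the boundary correction handled by $Z'$.
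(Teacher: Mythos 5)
Your argument is correct and follows essentially the same route as the paper: you replace each ``out'' constraint $\W^{\text{c}} - n^*$ by the ``in'' constraint of a partner $n+e$ with $e \in F_\rho \cap \Z^k$ (the paper's $n-m$, $m \in K$), observe that the partner always stays in $Z_\II(r\Omega)$ by $F_\rho$-invariance of $\rho^{-1}(r\Omega)$, and that it can escape $Z_\I(r\Omega)$ only when $\pi(n)$ lies within a bounded distance of $\partial(r\Omega)$, which by convexity yields the $O(r^{d-1})$ bound. The only point to tighten is the final count: the set $\Z^k \cap \pi^{-1}(N_K(\partial(r\Omega)))$ is infinite as written, so one must use that $Z' \subseteq Z_\I(r\Omega) \subseteq \widetilde{\calS}$ has bounded $\rho^*$-component before bounding the number of such lattice points by the volume of a thickened neighbourhood of $\partial(r\Omega)$, exactly as the paper does.
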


\begin{proof} Since $\W$ is a fundamental domain for $\{0\}^d \times \Z^{k-d}$ in $F_\rho$, we may express the closure of the complement of the window in $\W + (\W - \W)$ via the identity
\[
\W^\text{c} \cap (\W + (\W - \W)) = \big( \bigcup_{m \in K} \W + m^* \big) \cap (\W + (\W - \W)),
\]
where $K \subset \{0\} \times \Z^{k-d}$ is a finite set (in particular, it is the set of $3^{k-d} - 1$ sums of the form $\sum_{i=1}^{k-d} \epsilon_i \cdot e_{i+d}$ where the $\epsilon_i \in \{-1,0,+1\}$, not all $0$, and the $e_{i+d}$ are the $k-d$ standard basis vectors of $F_\rho$). Since $n^* \in \W-\W$ for all $n \in Z_\both^{\notin}(\PP)$ and $A(\PP) \subseteq \W$, we may replace the occurrences of $\W^\text{c}$ in~\eqref{eq:acceptance-domain} by the union above, giving
\[
A(\PP) = \bigcap_{n \in Z_\both^\in(\PP)} \W - n^* \cap \bigcap_{n \in Z_\both^{\notin}(\PP)} \big(\bigcup_{m \in K} \W - (n-m)^*\big).
\]
So, for all $n \in Z_\both^{\notin}(\PP)$, there exists some $m \in K$ for which $A(\PP)$ has non-trivial intersection with $\W-(n-m)^*$. If $(n-m) \in Z_\both^\in(\PP)$, then clearly removing $n$ from $Z_\both^{\notin}(\PP)$ does not change the intersection. By \eqref{eq:acceptance-domain}, since $A(\PP) \subseteq W-(n-m)^*$, we may remove $n$ from the list if $(n-m) \in Z_\both(r\Omega)$. Since $\W$ intersects $\W - (n-m)^*$, we have that $(n-m)^* \in \W-\W$, so it suffices to check that $(n-m) \in \pi^{-1}(r\Omega)$ in the case of a patch of type I and that $(n-m) \in \rho^{-1}(r\Omega)$ for a patch of type II.

In the latter case we have that $n \in \rho^{-1}(r\Omega)$ and, since $m \in F_\rho$, we still have that $(n-m) \in \rho^{-1}(r\Omega)$. Therefore all elements of $Z_\II^{\notin}(\PP)$ may be removed in (\ref{eq:acceptance-domain}) without changing the intersection. In short, the existence of certain points not being in $\PP_\II$ is automatically ensured by corresponding points being in $\PP_\II$.

For patches of type I, if $(n-m) \notin \pi^{-1}(r\Omega)$ then $n \in \pi^{-1}(r\Omega)^\text{c} + K$. Thus we wish to bound the size of the set
\[
Z_\I(r\Omega) \cap (\pi^{-1}(r\Omega)^\text{c} + K) = \Z^k \cap \widetilde{S} \cap \pi^{-1}(r\Omega \cap (r\Omega^\text{c} + \pi(K))) = \Z^k \cap X_r
\]
where $X_r = \widetilde{S} \cap \pi^{-1}(r\Omega \cap (r\Omega^\text{c} + \pi(K)))$. Consider the region $X'_r$ given as the union of unit boxes centred at the points of $\Z^k$ intersecting $X_r$ non-trivially. The number of lattice points in $X_r$ is bounded by the measure of $X'_r$. Notice that
\begin{align*}
\rho^*(X_r) & \subseteq \W-\W,~\text{and} \\
\pi(X_r) & \subseteq r\Omega \cap (r\Omega^\text{c} + \pi(K)) \subseteq N_{\kappa_1}(\partial r \Omega).
\end{align*}
Since $\rho^*$ and $\pi$ are complementary, and $X'_r \subseteq N_{\kappa_2}(X_r)$ (where $\kappa_2$ is simply the length of the diagonal of the unit cube in $\R^k$), it follows that
\[
\# (\Z^k \cap X_r) \leq |X'_r| \leq C |N_{\kappa_3}(\partial r \Omega)|,
\]
where $C$ and $\kappa_3$ are constants which depend only on $\Omega$, $L$ and $\pi$. It is not difficult to show that for a convex set $\Omega$, given $\kappa > 0$, there exists some $c$ for which $|N_\kappa(\partial r\Omega)| \leq c r^{d-1}$ for sufficiently large $r$. Since we need only include those elements of $\Z^k \cap X_r$ in $Z_\I^{\notin}(\PP)$ in the intersection defining $A(\PP)$, and since $\#(\Z^k \cap X_r) \leq cC r^{d-1}$, the result follows. \end{proof}

\begin{lemma}
Let $A_0 \subseteq \R^m$ be an axes parallel box with side lengths at most $1$, and let $A$ be a region obtained by removing $N$ translates of the unit cube from $A_0$, that is,
\[
A=A_0\setminus\bigcup_{i=1}^N([0,1)^m+x_i),
\]
where $x_i\in \R^m$. Then $A$ can be written as a union of $(N+1)^{m-1}$ axes parallel boxes, only overlapping on their boundaries.
\end{lemma}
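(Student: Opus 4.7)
The plan is to argue by induction on the dimension $m$. The crucial geometric observation is the following: since $A_0$ has all sides of length at most $1$, intersecting $A_0$ with any translate $[0,1)^m + x_i$ of the unit cube gives, in each coordinate direction separately, either the entire corresponding side of $A_0$, a prefix of that side, or a suffix, but never an interior subinterval. Consequently, in any fixed coordinate direction, each removed cube contributes at most one cut point in the interior of the corresponding side of $A_0$.

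For the base case $m=1$, $A_0$ is an interval of length at most $1$, and each translate of $[0,1)$ intersected with $A_0$ is either empty, all of $A_0$, a prefix, or a suffix. The union of all removed prefixes is again a prefix, and similarly for suffixes, so $A$ is what remains after removing at most one prefix and one suffix, which is a single interval (possibly empty). This gives at most $(N+1)^0 = 1$ box.

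For the inductive step, focus on the $m$-th coordinate direction. By the observation above, the $N$ removed cubes contribute at most $N$ distinct cut points $y_1 < \cdots < y_k$ in the interior of the $m$-th side of $A_0$, where $k \leq N$. Slice $A$ perpendicular to this axis along these cut points, producing $k+1 \leq N+1$ slabs of the form $\R^{m-1} \times [y_l, y_{l+1}]$. Within each such slab, any removed cube $U_i = [0,1)^m + x_i$ either fully covers the slab in the $m$-th direction (precisely when the open interval $(y_l, y_{l+1})$ lies inside $[x_{i,m}, x_{i,m}+1)$) or is disjoint from the slab in the $m$-th direction; no intermediate behaviour occurs, by the defining property of the cut points. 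Hence the intersection of $A$ with the slab has the product form $B_l \times [y_l, y_{l+1}]$, where $B_l$ is obtained from the projection of $A_0$ onto $\R^{m-1}$ — itself an axes parallel box with sides at most $1$ — by removing at most $N$ translates of $[0,1)^{m-1}$. The inductive hypothesis applied to $B_l$ gives a decomposition into at most $(N+1)^{m-2}$ axes parallel boxes, and multiplying across the at most $N+1$ slabs yields at most $(N+1)\cdot(N+1)^{m-2} = (N+1)^{m-1}$ axes parallel boxes overlapping only on their boundaries.

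The one real subtlety is the observation that each removed cube contributes at most one cut point per coordinate axis: this is exactly where the hypothesis on the side lengths of $A_0$ enters the argument, and without it the naive bound would be $(2N+1)^{m-1}$ rather than $(N+1)^{m-1}$. Everything else is routine slicing and product decomposition.
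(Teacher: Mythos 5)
Your proof is correct and follows essentially the same route as the paper's: induction on the dimension, slicing perpendicular to one coordinate axis at the at most $N$ hyperplanes arising from faces of the removed cubes (at most one per cube, by the side-length hypothesis), and applying the inductive hypothesis inside each of the at most $N+1$ resulting slabs. Your explicit justification of why each removed cube yields only one cut point per axis is a welcome elaboration of a step the paper states more tersely, but it is the same argument.
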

\begin{proof}
The claim may be proved inductively over the dimension $m$. The statement is obvious when $m=1$, since then $A$ is an interval. So suppose for the inductive step that the claim holds in dimension $m$, and $A \subseteq \R^{m+1}$. For each of the $N$ translates of the cube there is (at most) one face that is orthogonal to the $(n+1)$st basis vector and has a non-empty intersection with $A$. Denote the corresponding hyperplanes containing these faces by $H_1,\ldots,H_K$. Let $H_0$ and $H_{K+1}$ be the bottom and top faces defining $A_0$, respectively; there are at most $N+2$ hyperplanes in the collection $\{H_0,\ldots,H_{K+1}\}$. Within each slice between consecutive hyperplanes, up to a thickening by the distance between them, the region $A$ is effectively an $m$-dimensional axes parallel box with (at most) $N$ translates of the unit cube removed, so that, inductively, there is a decomposition of them into at most $(N+1)^{m-1}$ axes parallel boxes. Since there are at most $N+1$ slices, this leaves us with a decomposition into at most $(N+1)^m$ boxes.
\end{proof}

\begin{proof}[Proof of Theorem \ref{thm:other_patches}]
For a type II patch, by Lemma \ref{lem: controlled complements}, the acceptance domain is an axes parallel box, and hence, by Remark \ref{rem:axes parallel}, the proof of Theorem \ref{main:discrepancy} applies, giving the claim. For a type I patch $\PP = \PP_\I(y,r\Omega)$, by Lemma \ref{lem: controlled complements} the acceptance domain $A(\PP)$ is a box with at most $cr^{d-1}$ translates of the unit cube removed. By the above lemma, we may thus decompose $A(\PP)$ into a union of at most $cr^{(d-1)(k-d-1)}$ axes parallel boxes. Applying the proof of Theorem \ref{main:discrepancy} to each of these finishes the proof.
\end{proof}

\subsection{Other search regions}

In Theorem \ref{main:discrepancy} we were investigating the number of occurrences of a fixed patch $\PP$ within the box of points about $y \in Y$ whose lifts have first $d$ coordinates differing from those of $y$ by at most $R$. Instead of taking a box, we may take any general (bounded) shape in $E$. Given a {\bf search region} $A \subseteq E$, we set
\[
\xi_\PP(y,A) \coloneqq \frac{\#\{y' \in Y \mid \PP_\both(y',r\Omega) = \PP \text{ and } \tilde{y}' - \tilde{y} \in A + F_\rho\}}{\#\{y' \in Y \mid \tilde{y}' - \tilde{y} \in A + F_\rho\}}.
\]
Define $A_d$ to be the canonical projection of $A$ to $\R^d \times \{0\}^{k-d}$:
\[
A_d \coloneqq \{x_1 \in \R^d \mid \textrm{there exists } x_2\in F_\rho \textrm{ with }(x_1, x_2)\in A\}.
\]
Set $X_A \coloneqq \Z^d \cap A_d$, and note that the term of the denominator of $\xi_\PP(y,A)$ is precisely $\# X_A$. To estimate the quantity $\xi_\PP(y,A)$, we shall use the following theorem of Laczkovich.

\begin{theorem}[{\cite[Theorem~1.3]{Lac92}}]\label{thm:laczkovich}
 Let $H \subseteq \R^d$ be a region which is a finite union of integer translates of cubes $[-1/2, 1/2)^d$.
 Then there are dyadic cubes $Q_1, \ldots, Q_n$ (i.e. cubes whose side lengths are powers of $2$) such that
 \[
  H = \biggl( \bigcup_{i=1}^l Q_i \biggr) \setminus \biggl( \bigcup_{i=l+1}^n Q_i \biggr),
 \]
 where the $Q_i \cap Q_j = \emptyset$ whenever $i,j \leq l$ or $i,j > l$, and $Q_i \cap Q_j = Q_j$ whenever $i \leq l < j$.
 Furthermore, such a set of dyadic cubes can be chosen so that the number of cubes of side length $2^m$ is
 $\ll|\partial H| 2^{-m(d-1)}$, where $|\partial H|$ is the $(d-1)$-dimensional Hausdorff measure of the boundary of $H$, and the implies constant only depends on the dimension $d$.
\end{theorem}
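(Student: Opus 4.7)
The plan is to construct the dyadic decomposition via a standard Whitney / Calder\'on--Zygmund style top-down algorithm, and then to bound the cube count at each scale by the volume of a neighborhood of $\partial H$. First, enclose $H$ inside a dyadic cube $Q_0$ of side $2^M$ for $M$ sufficiently large. Classify each dyadic subcube $Q \subseteq Q_0$ as \emph{interior} when $\mathrm{int}(Q) \subseteq H$, \emph{exterior} when $\mathrm{int}(Q) \cap H = \emptyset$, and \emph{boundary} otherwise. Because $H$ is a finite union of integer unit cubes, the classification is total at scale $2^0 = 1$: every dyadic unit cube is either interior or exterior, since the boundary case would require a unit cube to contain both a piece of $H$ and of $H^c$ in its interior.

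Now run the recursion starting at $Q_0$: if a cube is interior, output it as positive; if exterior, discard it; if boundary, split into its $2^d$ dyadic children and recurse. Termination at scale $1$ is guaranteed by the previous paragraph, and the positive cubes produced form a disjoint partition of $H$. This already satisfies the conclusion of the theorem with $l = n$, no negative cubes required. The flexibility of the signed form in the statement is not needed for existence, but one may always coalesce---replacing a boundary cube $Q'$ about to be subdivided by $Q'$ declared positive together with its exterior descendants declared negative---to obtain a more compressed representation while preserving all disjointness and containment constraints.

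The heart of the proof is the scale-by-scale count bound. A positive output cube $Q$ of side $2^m$ arises as an interior child of some dyadic parent $Q'$ of side $2^{m+1}$ that is boundary, and in particular $\partial H \cap Q' \neq \emptyset$. Hence $Q$ lies within distance $O(2^m)$ of $\partial H$. Since $\partial H$ is a finite union of axis-aligned unit $(d-1)$-faces, the $d$-dimensional volume of its $\epsilon$-neighborhood satisfies $\mathrm{vol}(N_\epsilon(\partial H)) \ll \epsilon \cdot |\partial H|$ uniformly in $\epsilon$. The number of pairwise essentially disjoint dyadic cubes of side $2^m$ fitting into this neighborhood is therefore at most
\[
\frac{\mathrm{vol}(N_{c \cdot 2^m}(\partial H))}{2^{md}} \;\ll\; \frac{2^m \cdot |\partial H|}{2^{md}} \;=\; |\partial H| \cdot 2^{-m(d-1)},
\]
which is exactly the claimed bound.

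The main obstacle is the neighborhood-volume estimate $\mathrm{vol}(N_\epsilon(\partial H)) \ll \epsilon \cdot |\partial H|$, which would fail in general for sets with pathological boundary curvature or concentrations of surface measure. The piecewise flatness of $\partial H$---that it is a finite union of integer-translated unit $(d-1)$-faces---is precisely the structural fact which rescues this estimate and forces the implicit constant to depend only on $d$. Once this is in hand, the same reasoning applied to any negative cubes introduced by coalescing yields identical bounds, completing the argument.
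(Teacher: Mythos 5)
First, a point of comparison: the paper does not prove this statement at all --- it is quoted verbatim from Laczkovich \cite[Theorem~1.3]{Lac92} --- so your proposal has to be measured against that source rather than against anything in the paper. Measured that way, there is a genuine gap, and in fact your route cannot be repaired. Two things go wrong. (1) Your key estimate $\mathrm{vol}(N_\epsilon(\partial H)) \ll \epsilon\,|\partial H|$ ``uniformly in $\epsilon$'' is false: for $H$ a single unit square in $\R^2$ one has $|\partial H|=4$ while $\mathrm{vol}(N_\epsilon(\partial H))\sim \pi\epsilon^2$, so the inequality fails for large $\epsilon$; piecewise flatness saves it only for $\epsilon\lesssim 1$, whereas $\epsilon= c\cdot 2^m$ with $m$ large is exactly the regime where the bound $\ll|\partial H|2^{-m(d-1)}$ has content. (2) More fundamentally, your claim that the negative cubes are a dispensable ``flexibility'' is wrong: no positive-only disjoint dyadic decomposition can satisfy the count bound. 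Take $d=2$, $N$ a large perfect square, and let $H$ be $[0,N)^2$ with $N$ unit holes removed on a grid of spacing $\sqrt N$, so $|\partial H|=8N$ and $|H|=N^2-N$. Every dyadic square of side $\geq 2\sqrt N$ contains a hole, so in a positive-only decomposition every cube has side $O(\sqrt N)$; if the counts $n_m$ obeyed $n_m\ll N2^{-m}$, the covered volume would be $\sum_m n_m 4^m \ll N\sum_{2^m\ll\sqrt N}2^m\ll N^{3/2}$, which cannot equal $N^2-N$. So the interior/exterior/boundary recursion on which you base the whole count produces an object that does not satisfy the theorem, and the vague coalescing remark is not a substitute for a rule that actually controls the negative cubes.

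The mechanism in Laczkovich's argument is genuinely different and is worth internalising. For each scale one forms the ``majority vote'' approximant $H_m$, the union of all dyadic cubes $Q$ of side $2^m$ with $|Q\cap H|>|Q|/2$. The relative isoperimetric (Poincar\'{e}) inequality in a cube gives $\min(|Q\cap H|,|Q\setminus H|)\ll 2^m\,\mathcal{H}^{d-1}(\partial H\cap Q)$, whence $|H\,\triangle\, H_m|\ll 2^m|\partial H|$ and so $|H_m\,\triangle\, H_{m+1}|\ll 2^m|\partial H|$; since $H_m\,\triangle\, H_{m+1}$ is a disjoint union of dyadic cubes of side $2^m$, their number is $\ll|\partial H|2^{-m(d-1)}$. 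Telescoping the symmetric differences from $H_0=H$ up to a scale where $H_m=\emptyset$ yields the signed representation, and the subtracted cubes arise precisely where the majority vote overshoots $H$ --- as it must in the example above, where the efficient representation is the big square minus the holes. The isoperimetric input is the true ``heart of the proof''; your neighbourhood-volume heuristic is not an adequate stand-in for it.
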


\begin{theorem}\label{thm:cube complexes}
Consider a cubical cut and project set $Y$ that is typical in the sense of Theorem \ref{main:discrepancy}, for some $\epsilon>0$, and assume that $d\ge 2$. Fix a patch $\PP$, a point $y\in Y$ and a bounded set $A \subseteq E$.  Let $H$ be the cube-complex covering $X_A$, that is, let $H = \bigcup_{n \in X_A} [-1/2, 1/2]^d + n$. Then
\[
 \abs{\xi_\PP(y, A) - \xi_\PP} \le C \frac{|\partial H|}{\#X_A},
\]
where the constant $C$ depends on $E,\epsilon, \PP$ and $\pi$. In the case of type II patches, $C$ is independent of $\PP$.
\end{theorem}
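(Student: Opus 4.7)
The plan is to decompose $H$ into dyadic integer cubes using Laczkovich's theorem and apply the discrepancy estimate from the proof of Theorem~\ref{main:discrepancy} to each piece. Let $A(\PP) \subseteq \W$ denote the acceptance domain of $\PP$ (Lemma~\ref{lem: acceptance domains} and its corollary), so that $\xi_\PP = |A(\PP)|$ and $\PP_\both(y+\pi(n), r\Omega) = \PP$ iff $\LL(n)+y^* \in A(\PP)$. Then
\[
\#X_A \cdot (\xi_\PP(y,A) - \xi_\PP) = \#\{n \in X_A : \LL(n)+y^* \in A(\PP)\} - \#X_A \cdot |A(\PP)|,
\]
so the task reduces to bounding the discrepancy of the orbit $\{\LL(n)+y^*\}_{n\in X_A}$ with respect to the fixed target $A(\PP)$.

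Applying Theorem~\ref{thm:laczkovich}, write $H = \bigcup_{i=1}^l Q_i \setminus \bigcup_{i=l+1}^n Q_i$ as a signed union of dyadic cubes, with the number of cubes of side $2^m$ bounded by $N_m \ll |\partial H|\, 2^{-m(d-1)}$. Both $\#X_A = \#(\Z^d \cap H)$ and its ``good'' counterpart (the number of $n \in X_A$ with $\LL(n)+y^* \in A(\PP)$) split as signed sums over the $Q_i$, so by the triangle inequality our discrepancy is at most $\sum_i D_i$, where
\[
D_i := \bigl|\#\{n \in Q_i \cap \Z^d : \LL(n) + y^* \in A(\PP)\} - \#(Q_i \cap \Z^d) \cdot |A(\PP)|\bigr|.
\]
Each $D_i$ is then bounded exactly as in the proof of Theorem~\ref{main:discrepancy}: the Erd\H{o}s--Tur\'an--Koksma inequality together with Lemma~\ref{lem:technical} controls the discrepancy on any axes-parallel integer box in terms of its side length, since only the length of each coordinate range (not its position) enters the exponential-sum bound $\prod_i (2\|\inner{h}{\LL(e_i)}\|)^{-1}$. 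For a cube of side $2^m$ this yields $D_i \ll m^{k+\epsilon}$, with the implicit constant independent of $\PP$ in the type~II case (since $A(\PP)$ is already an axes-parallel box by Lemma~\ref{lem: controlled complements}) and carrying a factor $r^{(d-1)(k-d-1)}$ in the type~I case, after decomposing $A(\PP)$ into axes-parallel boxes as in the proof of Theorem~\ref{thm:other_patches}.

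Summing via the Laczkovich count gives
\[
\sum_i D_i \ll |\partial H| \sum_{m \geq 0} \frac{m^{k+\epsilon}}{2^{m(d-1)}},
\]
and the assumption $d \geq 2$ is precisely what makes the final series converge. Dividing by $\#X_A$ delivers the claimed bound. The main technical point to verify is that the exponential-sum estimate of Theorem~\ref{main:discrepancy} is genuinely uniform over the \emph{position} (not merely the side length) of the integer box to which it is applied, so that one may feed in the off-centre dyadic cubes produced by Laczkovich's theorem; this uniformity is immediate from the fact that the modulus of a geometric series does not depend on the starting index.
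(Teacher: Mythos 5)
Your proposal is correct and follows essentially the same route as the paper: decompose $H$ into dyadic cubes via Theorem~\ref{thm:laczkovich}, split the discrepancy as a signed sum over these cubes by the triangle inequality, bound each term by the per-box estimate underlying Theorem~\ref{main:discrepancy}, and sum using the Laczkovich cube count, with $d\ge 2$ ensuring convergence. Your explicit remark that the Erd\H{o}s--Tur\'an--Koksma exponential-sum bound is uniform over the position (not just the side length) of the integer box is a point the paper leaves implicit, and is a worthwhile clarification rather than a deviation.
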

\begin{proof}
Let $(Q_i)$ be as in Theorem \ref{thm:laczkovich}, such that $H$ is a union of the $(Q_i)_{i=1}^l$ minus the $(Q_j)_{j=l+1}^n$.
By a slight abuse of notation, we let $\#Q_i$ denote the number of integer points in $Q_i$.

The fact that the $Q_i$ are either disjoint or included in one another implies that
\[
 \# X_A \xi_{\mathcal{P}} (y, A) = \sum_{i=1}^{l} {\#Q_i} \xi_{\mathcal{P}} (y,Q_i) - \sum_{j=l+1}^{n} {\#Q_j} \xi_{\mathcal{P}} (y,Q_j).
\]
Since $( \sum_i \#Q_i - \sum_j \#Q_j) = \#X_A$, applying the triangle inequality we obtain
\[
 \#X_A | \xi_{\mathcal{P}} (y,A) - \xi_{\mathcal P} | \leq  \sum_{i=1}^n \# Q_i | \xi_{\mathcal P} (y, Q_i) - \xi_{\mathcal P} |.
\]
Each of the $Q_i$'s is a square, and therefore we can apply Theorem~\ref{main:discrepancy} repeatedly for each $Q_i$. For a dyadic $Q_i$ with side length $2^m$, we have that $\# Q_i = 2^{md}$, so we may write
\[
 | \xi_{\mathcal P} (y, Q_i) - \xi_{\mathcal P} | \leq C \frac{ (\log 2^m)^{k+\epsilon}}{2^{md}} = C \frac{(\log 2^m)^{k+\epsilon}}{\# Q_i}.
\]
Furthermore, there are at most $|\partial H| 2^{-m(d-1)}$ boxes $Q_i$ which have length $2^m$. Therefore
\[
 \#X | \xi_{\mathcal{P}} (y, X) - \xi_{\mathcal P} | \leq C \sum_{m=0}^\infty \frac{|\partial H|}{2^{m(d-1)}} \cdot (\log 2^m)^{k+\epsilon}.
\]
If $d \geq 2$, this sum is finite and we get
\[
 | \xi_{\mathcal{P}} (y, X) - \xi_{\mathcal P} | \leq C' \frac{|\partial H|}{\# X_A}.
\]
\end{proof}

\begin{remark}
The above proof does not apply when $d=1$, but in that case any convex set $A \subseteq E$ is an interval $I$, so that Theorem \ref{main:discrepancy} applies directly. In this case, in fact, it follows from Kesten's Theorem \cite{Kesten66} for $k-d = 1$ (see also Grepstad and Lev \cite{GreLev15} for $k-d > 1$) that the discrepancy is bounded, that is, $|I| \cdot |\xi_\PP(y,I) - \xi_\PP| \leq C_\PP$, although the constant may depend on $\PP$.
\end{remark}

Finally, we wish to express an intrinsic and more natural version of these quantities. Again fix a bounded search region $A$ containing the origin, and an equivalence class of patch $\PP$ of either type I or II, and define
\[
\xi'_\PP(y,A) \coloneqq \frac{\#\{y' \in Y \mid \PP_\both(y',r) \textrm{ and } y' \in A+y\}}{\# (Y \cap (A+y))}.
\]
The following theorem shows that for reasonable regions $A$ this quantity does not differ from $\xi_\PP(y,A)$ by too much:

\begin{lemma} Let $Y$ be a cubical cut and project set, $y \in Y$, $\PP$ be a patch of type I or II, and $A \subseteq E$ be a bounded search region containing the origin. Then there exists a constant $\kappa > 0$, depending only on $\pi$, for which
\[
|\xi_\PP(y,A) - \xi'_\PP(y,A)| \leq 2 \frac{|N_\kappa(\partial A)|}{\# X_A}.
\]
\end{lemma}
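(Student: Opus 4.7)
My plan is to write $\xi_\PP(y,A) = N_1/D_1$ and $\xi'_\PP(y,A) = N_2/D_2$ (with $D_1 = \#X_A$ by the discussion preceding the statement) and to reduce the claim via the elementary inequality
\[
\Bigl| \frac{N_1}{D_1} - \frac{N_2}{D_2} \Bigr|
\;\le\; \frac{|N_1-N_2|}{D_1} + \frac{N_2}{D_2}\cdot\frac{|D_1-D_2|}{D_1}
\;\le\; \frac{|N_1-N_2| + |D_1-D_2|}{\# X_A},
\]
where the second step uses $N_2 \le D_2$. It therefore suffices to majorise each of $|N_1-N_2|$ and $|D_1-D_2|$ by $|N_\kappa(\partial A)|$.

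Next I would observe that the only difference between the two counts is whether membership in $A$ is tested against $\rho(m)$ or against $\pi(m)$, where $m \coloneqq \tilde y' - \tilde y \in \widetilde{\calS} \cap \Z^k$. Since $\pi(m) - \rho(m) = \rho^*(m) - \pi^*(m)$, and for any $m \in \widetilde{\calS}$ one has $\rho^*(m) \in \W - \W$ and $\pi^*(m) \in \W_\pi - \W_\pi$, each of which is a bounded subset fixed by the scheme, there is a constant $\kappa_0 > 0$ (depending only on $\pi$, since the cubical window is fixed) for which $|\pi(m) - \rho(m)| \leq \kappa_0$ uniformly in $m \in \widetilde{\calS}$. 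Consequently, the conditions $\rho(m) \in A$ and $\pi(m) \in A$ can disagree only when $\rho(m) \in N_{\kappa_0}(\partial A)$, so that both $|N_1-N_2|$ and $|D_1-D_2|$ are bounded by $\#\{y' \in Y : \rho(m) \in N_{\kappa_0}(\partial A)\}$.

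The final step is the lattice-count estimate. Because $\W$ is a fundamental domain for $\{0\}^d \times \Z^{k-d}$ acting on $F_\rho$, the map sending $y' \in Y$ to the first $d$ coordinates of $\tilde y' - \tilde y$ is a bijection from $Y$ onto $\Z^d$ (this is the same observation that gives $(2\lfloor R\rfloor+1)^d$ as the exact count of $Y$-points in the extrinsic cylinder), and so the count just obtained is exactly $\#\bigl(\Z^d \cap (N_{\kappa_0}(\partial A))_d\bigr)$. This in turn is dominated by the Lebesgue measure of a unit-cube thickening of $(N_{\kappa_0}(\partial A))_d$; absorbing this thickening and the Jacobian of the coordinate isomorphism $E \cong \R^d$ into a slightly larger $\kappa$, still depending only on $\pi$, yields the bound $|N_\kappa(\partial A)|$ and completes the proof. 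The only real subtlety is this final passage from integer-point count to Lebesgue measure of a neighbourhood of $\partial A$, and it is routine since $\kappa_0$ is a fixed constant and no regularity of $A$ beyond boundedness is used.
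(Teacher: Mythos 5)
Your proof is correct and takes essentially the same route as the paper's: the same splitting of $N_1/D_1 - N_2/D_2$ using $\xi'_\PP(y,A) \le 1$, and the same observation that the two counts can only disagree at points whose $\rho$- and $\pi$-projections of the lift straddle $\partial A$, forcing them into a $\kappa$-neighbourhood of the boundary. The only (cosmetic) difference is that you make explicit the final passage from the integer-point count to the Lebesgue measure $|N_\kappa(\partial A)|$, which the paper leaves implicit.
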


\begin{proof} Consider the quantities $\# X_A \xi_\PP(y,A)$ and $\#(Y \cap (A+y)) \xi_\PP'(y,A)$. They are given by the number of occurrences of $\PP$ corresponding to points of $X_A$ and to points of $Y \cap (A+y)$, respectively. More precisely, there is a canonical bijection between the points of $X_A$ and the elements of the set
\[
X_A' \coloneqq \{y' \in Y \mid \tilde{y}' - \tilde{y} \in A + F_\rho\} = \{y' \in Y \mid \rho(\tilde{y}') \in A + \rho(\tilde{y})\},
\]
and the quantity $\# X_A \xi_\PP(y,A)$ is precisely the size of the set
\[
\{y' \in Y \mid \PP(y',r\Omega) = \PP \textrm{ and } \rho(\tilde{y}') \in A + \rho(\tilde{y})\}.
\]
So if $y' \in X_A'$ but $y' \notin A+y$, then $\rho(\tilde{y}') \in A + \rho(\tilde{y}) $ but $y' \in A^\textrm{c} + y$. Since $y,y' \in Y$, there exists a $\kappa > 0$ for which this may only be the case if $y' - y \in N_\kappa(\partial A)$. We have a similar statement for the opposite inclusions, and upon restricting to points corresponding to $\PP$, we have the inequalities
\[|\#X_A - (Y \cap (A+y))| \leq |N_\kappa(\partial A)|,\]
and
\[|\# X_A \xi_\PP(y,A) - \#(Y \cap (A+y)) \cdot \xi_\PP'(y,A)| \leq |N_\kappa(\partial A)|.\]
It follows that
\begin{align*}
|\xi_\PP(y,A) - \xi'_\PP(y,A)| &\leq \left | \frac{(\#(Y \cap (A+y)) - \#X_A) \xi_\PP'(y,A)}{\# X_A}\right | + \frac{|N_\kappa(\partial A)|}{\# X_A} \\
&\leq\frac{|N_\kappa(\partial A)|(1+\xi_\PP'(y,A))}{\# X_A} \leq 2 \frac{|N_\kappa(\partial A)|}{\# X_A}.
\end{align*}
\end{proof}
Finally, we deduce the following intrinsic version of Theorem \ref{thm:cube complexes}. Note that the region $A \cap N_\kappa(A^\textrm{c})^\textrm{c}$ in its statement may be interpreted as the set of points of $A$ sufficiently far from $\partial A$.

\begin{corollary} Consider a cubical cut and project set $Y$ that is typical in the sense of Theorem \ref{main:discrepancy} for some $\epsilon>0$, and assume that $d\ge 2$. Then there exists some $\kappa > 0$, depending only on $\pi$, for which, for any patch $\PP$, point $y\in Y$, and bounded set $A \subseteq E$, we have that
\[
 \abs{\xi'_\PP(y, A) - \xi_\PP} \le C \frac{|N_\kappa(\partial A)|}{|A \cap N_\kappa(A^\textrm{c})^\textrm{c}|}.
\]
In particular, for $A$ convex, bounded and containing a neighbourhood of the origin, we have that
\[
 \abs{\xi'_\PP(y, R A) - \xi_\PP} \le C R^{-1}
\]
for sufficiently large $R$. In each formula the constant $C$ depends on $E, \epsilon$, $\pi$ and, for type I patches, $\PP$.
\end{corollary}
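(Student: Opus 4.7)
The plan is to combine Theorem~\ref{thm:cube complexes} with the preceding lemma via the triangle inequality
\[
|\xi'_\PP(y,A) - \xi_\PP| \le |\xi'_\PP(y,A) - \xi_\PP(y,A)| + |\xi_\PP(y,A) - \xi_\PP|.
\]
The first summand is at most $2|N_{\kappa_0}(\partial A)|/\#X_A$ (with $\kappa_0$ depending only on $\pi$) by the preceding lemma, while the second is at most $C|\partial H|/\#X_A$ by Theorem~\ref{thm:cube complexes}, where $H = \bigcup_{n \in X_A}(n+[-1/2,1/2]^d)$ is the associated cube complex and $C$ carries the allowed dependencies on $E, \epsilon, \pi$ (and $\PP$ for type~I).

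To replace the denominator by $|A \cap N_\kappa(A^\textrm{c})^\textrm{c}|$, I would observe that any lattice point $n \in \Z^d$ whose unit cube $n+[-1/2,1/2]^d$ lies entirely in $A_d$ belongs to $X_A$, and the union of such cubes covers $\{x \in A_d : \textrm{dist}(x, A_d^\textrm{c}) > \sqrt{d}/2\}$. Since $\pi|_E$ is a linear isomorphism onto $\R^d \times \{0\}^{k-d}$ whose operator norm and inverse operator norm depend only on $\pi$, this yields $\#X_A \ge c\,|A \cap N_{\kappa_2}(A^\textrm{c})^\textrm{c}|$ for a $\kappa_2 > 0$ depending only on $\pi$. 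For the numerator, each $(d-1)$-dimensional facet of $\partial H$ separates a point of $X_A$ from an adjacent lattice point outside $A_d$, and hence sits within distance $1$ of $\partial A_d$. There are at most a dimensional constant times $|N_1(\partial A_d)|$ such unit facets, and transferring through $\pi|_E$ once more gives $|\partial H| \le c'\,|N_{\kappa_1}(\partial A)|$ for some $\kappa_1 > 0$ depending only on $\pi$. Setting $\kappa \coloneqq \max(\kappa_0, \kappa_1, \kappa_2)$ and combining these estimates proves the first claimed inequality.

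For the final assertion, fix bounded convex $A$ containing a neighborhood of the origin. By the standard tube formula for convex bodies, $|N_\kappa(\partial RA)| = O(R^{d-1})$ as $R \to \infty$, and since $A$ contains a ball about the origin we have $|RA \cap N_\kappa((RA)^\textrm{c})^\textrm{c}| = R^d|A| - O(R^{d-1}) = \Theta(R^d)$ for $R$ sufficiently large. The bound already derived therefore specialises to $|\xi'_\PP(y, RA) - \xi_\PP| = O(R^{-1})$. The only real obstacle in the argument is the bookkeeping of the various constants $\kappa_i$, confirming that each depends only on the parameters permitted by the theorem statement; the underlying geometric estimates are otherwise routine.
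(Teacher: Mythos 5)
Your proposal is correct and follows essentially the same route as the paper: the triangle inequality combining Theorem~\ref{thm:cube complexes} with the preceding lemma, the lower bound $\#X_A \gg |A \cap N_\kappa(A^{\mathrm{c}})^{\mathrm{c}}|$ via fully contained unit cubes, the upper bound $|\partial H| \ll |N_\kappa(\partial A)|$ via facets lying near $\partial A_d$, and the tube-formula estimates for the convex case. The constant bookkeeping you flag is handled in the paper at the same level of precision (identifying $A$ with $A_d$ and taking the maximum of the various $\kappa_i$), so there is nothing further to add.
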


\begin{proof}
It follows from Theorem \ref{thm:cube complexes} and the above lemma that
\begin{equation} \label{eq:intrinsic}
\abs{\xi'_\PP(y, A) - \xi_\PP} \leq  C \frac{|\partial H|}{\#X_A} + 2 \frac{|N_\kappa(\partial A)|}{\# X_A}.
\end{equation}
Firstly, we claim that there exists $\kappa > 0$ depending only on the dimension for which $\# X_A \geq |A \cap N_\kappa(A^\textrm{c})^\textrm{c}|$. In what follows, we shall often identify $A$ with $A_d$, and $|A|$ with $|A_d|$, etc. Consider the set $X'_A \subseteq X_A$ of lattice points which are further than distance $\kappa_1$ from $A_d^\textrm{c}$, where $\kappa_1$ is the length of the diagonal of a unit cube in $\R^d$. The cube-complex $Q$ of unit cubes centred at the points of $X_A'$ contains all points of $A_d$ which are further than distance $\kappa_1$ from $A_d^\text{c}$. It follows that $\# X_A \geq \# X_A' = |Q| \geq |A \cap N_{\kappa_1}(A^\textrm{c})^\textrm{c}|$.

Secondly, we claim that there exist $c, \kappa > 0$ depending only on the dimension for which $c|\partial H| \leq |N_\kappa(\partial A)|$. Construct a cube-complex $Q$ by placing cubes of side lengths $1/2$ at the centres of each face of $\partial H$. Since $|\partial H|$ is equal to the number of its faces, and each cube is disjoint, we have that $|Q| = (1/2)^d |\partial H|$. A face of $\partial H$ must be within some $\kappa_2$ of the boundary of $A_d$, so $Q \subseteq N_{\kappa_3}(\partial A_d)$ for some $\kappa_3 > 0$. It follows that $|\partial H| = 2^d |Q| \leq 2^d |N_{\kappa_3}(\partial A)|$.

Inserting this information into (\ref{eq:intrinsic}), we have that
\[
\abs{\xi'_\PP(y, A) - \xi_\PP} \leq  (C'+2) \frac{|N_{\kappa'}(\partial A)|}{|A \cap N_{\kappa'}(A^\textrm{c})^\textrm{c}|}
\]
where $C'$ is independent of $\PP$ in the case of a patch of type II and $\kappa' = \max\{\kappa,\kappa_1,\kappa_3\}$ depends only on $\pi$. For any $\kappa > 0$ and $A$ sufficiently regular  (e.g. convex and containing a neighbourhood of the origin), there exist constants $c_1, c_2$ for which $|N_\kappa(\partial RA)| \leq c_1 R^{d-1}$ and $|RA \cap N_\kappa(RA^\textrm{c})^\textrm{c}| \geq c_2 R^d$ for sufficiently large $R$, from which the claim follows. \end{proof}

\bibliographystyle{abbrv}

\bibliography{./biblio}

\vspace*{.1in}

{\footnotesize

\noindent AH\,:\\
University of Houston, Texas, USA\\
haynes@math.uh.edu

\vspace{.1in}

\noindent AJ\,:\\
Nord University, Levanger, Norway\\
antoine.julien@nord.no

\vspace{.1in}

\noindent HK\,:\\
University of Vienna, Austria\\
henna.koivusalo@univie.ac.at

\vspace{.1in}

\noindent JW\,:\\
University of Durham, UK\\
james.j.walton@durham.ac.uk

}

\end{document}